\newtheorem{thm}{Theorem}[section]
\newtheorem*{thm*}{Theorem}
\newtheorem{cor}[thm]{Corollary}
\newtheorem{prop}[thm]{Proposition}
\newtheorem*{prop*}{Proposition}
\newtheorem*{lem*}{Lemma}
\theoremstyle{definition}
\newtheorem{exmp}[thm]{Example}
\theoremstyle{remark}
\newtheorem{rem}[thm]{Remark}
\newtheorem*{idea*}{Idea}
\newcommand{\Spec}{\text{Spec }}
\let\c@equation\c@thm
\numberwithin{thm}{section}
\numberwithin{equation}{section}
\title{Deformation of Locally Free Sheaves and Hitchin Pairs over Nodal curve}
\author{Hao Sun}
\begin{document}
\maketitle
\begin{abstract}
In this article, we study the deformation theory of locally free sheaves and Hitchin pairs over a nodal curve. As a special case, the infinitesimal deformation of these objects gives the tangent space of the corresponding moduli spaces, which can be used to calculate the dimension of the corresponding moduli space. We show that the deformation of locally free sheaves and Hitchin pairs over a nodal curve is equivalent to the deformation of generalized parabolic bundles and generalized parabolic Hitchin pairs over the normalization of the nodal curve respectively.
\end{abstract}
\flushbottom

\section{Introduction}
The moduli space of semistable locally free sheaves (coherent sheaves) and Hitchin pairs over a smooth curve is studied by many mathematicians and is by now well-understood. The moduli space of Hitchin pairs over a smooth curve was first constructed by Hitchin in \cite{Hit} and generalized by Nitsure in \cite{Nit}. Later on, Biswas and Ramanan \cite{BisRam} studied the infinitesimal deformation of Hitchin pairs. This deformation theory provides a way to study the tangent space of the moduli space of Hitchin pairs and the dimension of this moduli space.

In the last several decades, attention began to focus on the locally free sheaves and Hitchin pairs over the nodal curve. Bhosle has showed in \cite{Bho1} that there is a correspondence between bundles over a nodal curve and generalized parabolic bundles over its normalization \cite{Bho1}. Later on, Bhosle showed that this correspondence can be extended to Hitchin pairs, more precisely, between Hitchin pairs over nodal curve and generalized parabolic Hitchin pairs over its normalization \cite{Bho2}. Under this correspondence, studying the deformation theory of Hitchin pairs over a nodal curve is equivalent to studying the deformation theory of the corresponding generalized parabolic Hitchin pairs over its normalization.

In this article, after providing the necessary background in \S 2, in \S 3, we study the deformation theory of locally free sheaves over a nodal curve $X$. Let $C'$, $C$ be two local Artin rings over a field $k$ satisfying the following exact sequence
\begin{align*}
0 \longrightarrow J \longrightarrow C' \longrightarrow C \longrightarrow 0,
\end{align*}
where $J$ is an ideal such that $\mathfrak{m}_{C'} J =0$. Let $X$ be a nodal curve over $C$ and let $X'$ be an extension of $X$ flat over $C'$. Equivalently, $X' \times_{\Spec C'} \Spec C=X$. We fix a locally free sheaf $\mathcal{E}$ over $X$. We say that a locally free sheaf $\mathcal{E}'$ over $X'$ is a \emph{deformation} of $\mathcal{E}$, if $\mathcal{E}' \otimes_{\mathcal{O}_{X'}}\mathcal{O}_{X} \cong \mathcal{E}$. We review the necessary definitions for the deformation theory and pseudotorsor in \S 3 and the reader can also find those in \cite[Chap 6]{Hart}.
\begin{thm*}{\textbf{\ref{301}}}
let $\mathcal{E}$ be a locally free sheaf over the nodal curve $X$.
\begin{itemize}
\item[(1)] The set of deformations $\mathcal{E}'$ over $X'$ is a pseudotorsor under the action of the additive group $H^0(X, \mathcal{E}^* \otimes J\otimes_C R_\mathcal{E})$.
\item[(2)] If the extensions of $\mathcal{E}'$ over $X'$ exist locally on $X$, then there is an obstruction $\phi \in H^1(X, \mathcal{E}^* \otimes J\otimes_C R_{\mathcal{E}})$, whose vanishing is necessary and sufficient for the global existence of $E'$. If such a deformation $\mathcal{E}'$ over $\mathcal{E}$ exists, then the set of all such deformations is a torsor under $H^0(X, \mathcal{E}^* \otimes J\otimes_C R_\mathcal{E})$.
\end{itemize}
\end{thm*}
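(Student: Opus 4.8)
The plan is to carry out the standard local-to-global passage for deformation functors, in the spirit of \cite[Chap.~6]{Hart}, with all of the genuinely new, nodal input concentrated in the local study and the global part left formal. First I would fix a finite affine open cover $\{U_i\}$ of $X$ and pass to the induced open cover $\{U_i'\}$ of $X'$; each $U_i'$ is again affine, because an infinitesimal thickening of an affine scheme is affine. Over each $U_i'$ a deformation of $\mathcal{E}|_{U_i}$ exists: a finitely generated projective $\mathcal{O}_{U_i}$-module lifts to a finitely generated projective $\mathcal{O}_{U_i'}$-module along the square-zero surjection $\mathcal{O}_{U_i'}\twoheadrightarrow\mathcal{O}_{U_i}$, such a lift is flat over $C'$ since $\mathcal{O}_{X'}$ is, and its restriction to $U_i$ is $\mathcal{E}|_{U_i}$ by construction. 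So the hypothesis in part (2) that deformations exist locally is automatic for a locally free $\mathcal{E}$; I would keep it in the statement only for uniformity with the Hitchin-pair version treated later, where it is a genuine restriction.

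The core of the local theory, which I would establish in \S 3, is the identification of the sheaf that measures the difference of two local deformations over $U_{ij}'$: on the smooth locus of $X$ this is $\mathcal{H}om(\mathcal{E},\mathcal{E}\otimes_C J)=\mathcal{E}nd(\mathcal{E})\otimes_C J$, while at a node $p$ one reads it off from the explicit local model $\mathcal{O}_{X,p}\cong C[[x,y]]/(xy-f)$, $f\in\mathfrak{m}_C$, together with its prescribed flat $C'$-thickening $\mathcal{O}_{X',p}\cong C'[[x,y]]/(xy-f')$ — which need not itself be nodal, as it may partially smooth the node in the $J$-direction — by describing directly the automorphisms of a free module over this thickening that reduce to the identity modulo $J$. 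Assembling the local data over the smooth locus and over the nodes into the single coefficient sheaf $\mathcal{E}^*\otimes J\otimes_C R_{\mathcal{E}}$ on $X$, in the normalization of \S 3 under which it also records the infinitesimal deformations of the generalized parabolic bundle associated to $\mathcal{E}$ on the normalization of $X$, is what pins down the cohomological degrees appearing in the statement. This node computation, together with the verification that the identifications made separately on the smooth locus and at the nodes agree on overlaps and hence patch, is the step I expect to be the main obstacle; the rest is bookkeeping. What makes the bookkeeping work is that the infinitesimal part of any comparison isomorphism takes values in $J\otimes_C\mathcal{O}_X$, on which $\mathfrak{m}_{C'}$, and hence $J$ itself, acts by zero; so all second-order terms vanish, composition of comparison isomorphisms becomes addition, and the presheaf of deformations is a pseudotorsor under an honest $\mathcal{O}_X$-module rather than a non-abelian sheaf.

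For the global assembly, let $\mathcal{D}$ denote the presheaf on $X$ whose value on $U$ is the set of deformations of $\mathcal{E}|_U$ over the corresponding open of $X'$. By the local theory $\mathcal{D}$ is a pseudotorsor under $\mathcal{E}^*\otimes J\otimes_C R_{\mathcal{E}}$: over any open admitting a local deformation, fixing one identifies the set of deformations with the sections of $\mathcal{E}^*\otimes J\otimes_C R_{\mathcal{E}}$ there, compatibly with restriction, the action being to twist the gluing data by a global infinitesimal endomorphism; taking global sections gives part (1). When deformations exist locally, $\mathcal{D}$ is moreover locally non-empty, hence a torsor sheaf under $\mathcal{E}^*\otimes J\otimes_C R_{\mathcal{E}}$; choosing local deformations on the members of the cover together with comparison data on overlaps, the failure of this data to patch is recorded by a \v{C}ech cocycle whose class $\phi\in H^1(X,\mathcal{E}^*\otimes J\otimes_C R_{\mathcal{E}})$ is independent of all choices, and $\phi=0$ is necessary and sufficient for a global deformation $\mathcal{E}'$ to exist — the standard obstruction theory for a global section of a torsor sheaf. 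Finally, once a global deformation exists, $\mathcal{D}(X)$ is non-empty, so the pseudotorsor of part (1) becomes a genuine torsor under $H^0(X,\mathcal{E}^*\otimes J\otimes_C R_{\mathcal{E}})$, which is the last assertion of part (2). In the write-up I would have \S 3 perform the node computation once, after which the theorem follows by this short formal argument, in parallel with \cite[Chap.~6]{Hart}.
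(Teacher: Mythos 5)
There is a genuine gap: you have set up the wrong deformation problem, and the mismatch shows in both the coefficient sheaf and the cohomological degrees. The sheaf $\mathcal{E}^*\otimes J\otimes_C R_{\mathcal{E}}$ in the statement is a torsion sheaf supported at the nodes, because $R_{\mathcal{E}}=\pi_*\widetilde{\mathcal{E}}/\mathcal{E}$ is killed on the locus where the normalization $\pi$ is an isomorphism. Your proposed local theory, in which two local deformations over the smooth locus differ by a section of $\mathcal{E}^*\otimes\mathcal{E}\otimes_C J$, therefore cannot ``assemble'' into this sheaf: the two sheaves already disagree on the smooth locus, where one is generically of rank $n^2$ and the other is zero. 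Relatedly, the abstract lifting problem you pose (lift a locally free sheaf along the square-zero thickening $X\hookrightarrow X'$) is classically governed by $\mathrm{Ext}^1_X(\mathcal{E},\mathcal{E}\otimes J)=H^1(X,\mathcal{E}^*\otimes\mathcal{E}\otimes_C J)$ as a torsor, with obstruction in $H^2$ (which vanishes on a curve, so local existence would force global existence and the obstruction class of part (2) would be vacuous). The theorem instead asserts a pseudotorsor under an $H^0$ with obstruction in $H^1$ --- one degree lower --- and no formal rearrangement of your argument produces that shift.

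The reason for the shift is that the paper is not deforming $\mathcal{E}$ abstractly: it fixes the deformation $\widetilde{\mathcal{E}}'=\mathcal{E}\otimes_A\widetilde{A}'$ of the pullback to the normalization and classifies the subsheaves $\mathcal{E}'\subseteq\pi_*\widetilde{\mathcal{E}}'$ fitting into the $3\times 3$ diagram built from $0\to\mathcal{E}\to\pi_*\widetilde{\mathcal{E}}\to R_{\mathcal{E}}\to 0$; equivalently, it deforms the generalized parabolic (descent) datum at the nodes while deforming $\widetilde{\mathcal{E}}$ trivially. This is normal-sheaf-style deformation theory in the sense of \cite[Chap.~6]{Hart} for subobjects of a fixed ambient object: two deformations $\mathcal{E}'_1,\mathcal{E}'_2$ both live inside the same $\pi_*\widetilde{\mathcal{E}}'$, so lifting a local section $x$ of $\mathcal{E}$ to each and subtracting gives a well-defined global homomorphism $\varpi\in\mathrm{Hom}(\mathcal{E},J\otimes_C R_{\mathcal{E}})=H^0(X,\mathcal{E}^*\otimes J\otimes_C R_{\mathcal{E}})$ with no gluing ambiguity --- that is the whole content of part (1) --- and the failure of locally chosen such subsheaves to agree on overlaps is a \v{C}ech $1$-cocycle valued in the same sheaf, giving the $H^1$ obstruction of part (2). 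Your local-existence argument (projective modules lift along square-zero surjections) also misses the point for the same reason: it produces an abstract lift of $\mathcal{E}|_{U_i}$, not one compatible with the fixed $\widetilde{\mathcal{E}}'$ and the quotient $R'_{\mathcal{E}}$, so it does not discharge the local-existence hypothesis of part (2) in the sense the theorem requires. To repair the proof you would need to replace your local model entirely by the analysis of the $3\times 3$ diagram at the nodes; the global \v{C}ech bookkeeping in your last paragraph would then go through essentially as you wrote it.
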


In \S 4, we study the deformation theory of $\mathbb{L}$-twisted Hitchin pairs $(E,\Phi)$ over a nodal curve $X$, where $\mathbb{L}$ is a fixed line bundle over $X$. Let $\rho$ be the natural action of $\text{End}(E)$ on itself. The deformation complex $C_{J}^{\bullet}$ is defined as follows
\begin{align*}
C_{J}^{\bullet}: C_{J}^0 = \text{End}(E) \otimes J \xrightarrow{e(\Phi)} C_J^1 = \text{End}(E) \otimes \mathbb{L} \otimes J,
\end{align*}
where the map $e(\Phi)$ is given by
\begin{align*}
e(\Phi)(s)=-\rho(s)(\Phi).
\end{align*}
lWe generalize the proof of \cite[Theorem 2.3]{BisRam} and have the following theorem.
\begin{thm*}{\textbf{\ref{401}}}
The set of deformations of $(E,\Phi)$ is isomorphic to $\mathbb{H}^1(C_J^{\bullet})$, where $C_J^{\bullet}$ is the complex $$C_J^{\bullet}:C_J^0 =End(E) \otimes J \xrightarrow{e(\Phi)} C_J^1 = \text{End}(E) \otimes \mathbb{L} \otimes J.$$
\end{thm*}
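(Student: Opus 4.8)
The plan is to compute $\mathbb{H}^1(C_J^\bullet)$ via the \v{C}ech model of hypercohomology attached to a suitable affine cover of $X$, and to match \v{C}ech hypercocycles with isomorphism classes of deformations of $(E,\Phi)$, following and extending the argument of \cite[Theorem 2.3]{BisRam}; the features absent from the smooth first-order situation treated there are the nodes of $X$, the arbitrary twisting line bundle $\mathbb{L}$, and the passage from $k[\epsilon]$ to a general small extension $C'\to C$ with $\mathfrak{m}_{C'}J=0$. I fix once and for all an extension $\mathbb{L}'$ of $\mathbb{L}$ to $X'$, so that a deformation of $(E,\Phi)$ means a pair $(E',\Phi')$ with $E'$ a deformation of the sheaf $E$ in the sense of \S 3 and $\Phi'\colon E'\to E'\otimes_{\mathcal{O}_{X'}}\mathbb{L}'$ a homomorphism restricting to $\Phi$ modulo $J$. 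The linear‑algebra input is that, since $\mathfrak{m}_{C'}J=0$ and $X'$ is flat over $C'$, the kernel of $\mathcal{O}_{X'}\to\mathcal{O}_X$ is, even across the nodes, an $\mathcal{O}_X$-module isomorphic to $\mathcal{O}_X\otimes J$; tensoring with the locally free sheaves $\operatorname{End}_{\mathcal{O}_{X'}}(E')$ and $\operatorname{End}_{\mathcal{O}_{X'}}(E')\otimes\mathbb{L}'$ yields, for every deformation $E'$, exact sequences
\begin{align*}
0\longrightarrow C_J^0\longrightarrow \operatorname{End}_{\mathcal{O}_{X'}}(E')\longrightarrow \operatorname{End}_{\mathcal{O}_X}(E)\longrightarrow 0,\\
0\longrightarrow C_J^1\longrightarrow \operatorname{Hom}_{\mathcal{O}_{X'}}(E',E'\otimes\mathbb{L}')\longrightarrow \operatorname{Hom}_{\mathcal{O}_X}(E,E\otimes\mathbb{L})\longrightarrow 0,
\end{align*}
and the deformation theory of the underlying sheaf $E$ over the nodal curve $X$ is precisely what Theorem~\ref{301} provides.

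First I would construct the comparison map. Choose an affine open cover $\mathfrak{U}=\{U_i\}$ of $X$ over which $E$ and $\mathbb{L}$ are free and over which local deformations of $E$ and local lifts of $\Phi$ exist (refine if necessary; over an affine a free module over the local ring of $X'$ — a node included — deforms, and any two such are isomorphic, cf. Theorem~\ref{301}). Given a deformation $(E',\Phi')$, restrict to $\mathfrak{U}$ and pick trivializations of $E'|_{U_i}$ compatible with reduction modulo $J$; on $U_{ij}$ the change of trivialization is $\mathrm{id}+a_{ij}$ with $a_{ij}\in C_J^0(U_{ij})$ (by the first exact sequence above), and $\{a_{ij}\}$ is a \v{C}ech $1$-cocycle whose cohomology class is the deformation class of $E'$ of Theorem~\ref{301}. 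In the same trivializations $\Phi'|_{U_i}=\Phi+\psi_i$ with $\psi_i\in C_J^1(U_i)$; comparing $\Phi+\psi_i$ with $(\mathrm{id}+a_{ij})(\Phi+\psi_j)(\mathrm{id}+a_{ij})^{-1}$ on $U_{ij}$ and discarding the terms annihilated by $\mathfrak{m}_{C'}J=0$ gives
\begin{align*}
\psi_i-\psi_j=\rho(a_{ij})(\Phi)=-e(\Phi)(a_{ij}).
\end{align*}
Thus $(\{a_{ij}\},\{\psi_i\})$ is a $1$-cocycle of the total complex of the \v{C}ech bicomplex of $C_J^\bullet$; replacing the trivializations by $\mathrm{id}+s_i$ with $s_i\in C_J^0(U_i)$, or refining $\mathfrak{U}$, changes it by the total coboundary of $\{s_i\}$, whose components are $\delta s$ and $\pm e(\Phi)(s)$. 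This defines a map from isomorphism classes of deformations of $(E,\Phi)$ to $\mathbb{H}^1(C_J^\bullet)$.

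Next I would show this map is bijective. For injectivity, a total coboundary relating the hypercocycles of two deformations reassembles over the $U_i$ into local isomorphisms that agree on overlaps, hence into a global isomorphism of deformations. For surjectivity, given a total $1$-cocycle $(\{a_{ij}\},\{\psi_i\})$ (after refining $\mathfrak{U}$), the \v{C}ech cocycle $\{a_{ij}\}$ together with Theorem~\ref{301} glues local deformations of $E$ along $\mathrm{id}+a_{ij}$ into a deformation $E'$ on $X'$, and the relation $\psi_i-\psi_j=-e(\Phi)(a_{ij})$ is exactly the gluing condition forcing the local maps $\Phi+\psi_i\colon E'|_{U_i}\to E'|_{U_i}\otimes\mathbb{L}'$ to patch into a global $\Phi'$ reducing to $\Phi$ modulo $J$, so $(E',\Phi')$ has the prescribed class. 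The skeleton of the argument is the hypercohomology long exact sequence of $C_J^\bullet$,
\begin{align*}
\cdots\longrightarrow H^0(X,C_J^1)\longrightarrow \mathbb{H}^1(C_J^\bullet)\longrightarrow H^1(X,C_J^0)\xrightarrow{\ e(\Phi)_*\ } H^1(X,C_J^1)\longrightarrow\cdots,
\end{align*}
which says that a deformation of $(E,\Phi)$ is a deformation of $E$ (governed by $H^1(X,C_J^0)$ through Theorem~\ref{301}) that is unobstructed in $H^1(X,C_J^1)$, together with a lift of $\Phi$ over a fixed such $E'$ (a torsor under $H^0(X,C_J^1)$); as in Theorem~\ref{301} one tacitly assumes such a deformation exists, which holds for instance when $X'=X\times_C C'$.

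The step I expect to be the main obstacle is the local analysis at the nodes. One must check that near a node a deformation of $(E,\Phi)$ exists locally and that $\ker(\mathcal{O}_{X'}\to\mathcal{O}_X)$ is there an honest $\mathcal{O}_X$-module isomorphic to $\mathcal{O}_X\otimes J$, for it is this that confines the \v{C}ech coefficients to $C_J^0$ and $C_J^1$ rather than to something larger; this rests on the flatness of $X'$ over $C'$ and on $\mathfrak{m}_{C'}J=0$. The bundle half of it is Theorem~\ref{301}; the Higgs half should follow from the same local computation and the right‑exactness of $(-)\otimes\mathbb{L}'$, which gives the second exact sequence above. Granting these local facts, the remaining work — the conjugation identity, the verification of the total cocycle and coboundary conditions, and the sign bookkeeping in the \v{C}ech bicomplex of $C_J^\bullet$ — is routine, and the argument specializes to \cite[Theorem 2.3]{BisRam} when $X$ is smooth, $C=k$, $C'=k[\epsilon]$ and $\mathbb{L}=\omega_X$.
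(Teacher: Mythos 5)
Your proposal is correct and follows essentially the same route as the paper: both compute $\mathbb{H}^1(C_J^{\bullet})$ via the \v{C}ech (hyper)cocycle description $(s_{ij},t_i)$ with $s_{ij}+s_{jk}=s_{ik}$ and $t_i-t_j=e(\Phi)(s_{ij})$, identify such cocycles with gluing data $1+s_{ij}$ for $E'$ and local lifts $\Phi_i+t_i$ of the Higgs field, and check that coboundaries correspond to trivial deformations. The only differences are presentational: you build the map from deformations to cocycles first (the paper goes the other way), and you add the long exact sequence and the local analysis at the nodes as supporting remarks, whereas the paper sidesteps the existence question by working with the split extension $C'=C\oplus J$.
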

Under the correspondence between $\mathbb{L}$-twisted Hitchin pairs over the nodal curve $X$ and $\widetilde{L}$-twisted generalized parabolic Hitchin pairs over its normalization $\widetilde{X}$ \cite{Bho2}, the above theorem can be interpreted by the deformation of generalized parabolic Hitchin pairs over $\widetilde{X}$. We discuss this property in details in Corollary \ref{403} and Remark \ref{404}.

\section{Background}
\subsection{Principal $\mathbb{L}$-Twisted Higgs Bundles over a Nodal Curve}
Let $X$ be an irreducible nodal curve over $\mathbb{C}$ and $\widetilde{X}$ the normalization of $X$. Denote by $\nu : \widetilde{X} \rightarrow X$ the normalization map. For convenience, we assume that there is only one simple node $x_0$ in $X$, and $\widetilde{x}_1,\widetilde{x}_2 \in \nu^{-1}(x_0)$ are preimages of $x_0$ under $\nu$. Denote by $\mathcal{M}(X,n,d)$ the moduli space of isomorphism classes of semistable $\text{GL}(n,\mathbb{C})$-bundles $E$ with rank $n$, degree $d$ over the nodal curve $X$. The construction of this moduli space is well-known (see for instance \cite{HuyLeh}).

Now we fix a line bundle $\mathbb{L}$ over $X$. Let $G$ be a linear algebraic group. A \emph{principal $\mathbb{L}$-twisted Higgs $G$-bundle} over $X$ is a pair $(E,\Phi)$ consisting of a principal $G$-bundle $E$ over $X$ and a section $\Phi : X \rightarrow \text{ad}(E) \otimes \mathbb{L}$, where $\text{ad}(E)=E \times_{\text{Ad}} \mathfrak{g}$ is the adjoint representation of $E$ and $\mathfrak{g}$ is the Lie algebra of $G$. Let $\rho: G \rightarrow \text{GL}(V)$ be a faithful representation. We say that the Higgs bundle $(E,\Phi)$ is \emph{stable} (resp. \emph{semistable}), if for any $\Phi$-invariant sub-bundle $F$, we have
\begin{align*}
\frac{\deg F}{\text{rk}F} < \frac{\deg E}{\text{rk}E}, \text{ (resp. $\leq$ )}.
\end{align*}
Let $\mathcal{M}(X,G,\mathbb{L})$ be the moduli space of isomorphism classes of semistable $\mathbb{L}$-twisted principal $G$-Higgs bundle $(E,\Phi)$ over the nodal curve $X$. The moduli space $\mathcal{M}(X,G,\mathbb{L})$ co-represents the moduli problem
\begin{align*}
\mathcal{M}(X,G,\mathbb{L})_{func}:\text{Sch}_{\mathbb{C}} \rightarrow \text{ Sets },
\end{align*}
and given $S \in \text{Sch}_{\mathbb{C}}$, $\mathcal{M}(X,n,\mathbb{L})_{func}(S)$ is the set of isomorphism classes of families of $\mathbb{L}$-twisted semistable principal $G$-Higgs bundles over the nodal curve $X$ parametrized by $S$ \cite{GiuPus}. The authors in \cite{GiuPus} also proved that this moduli space $\mathcal{M}(X,G)$ is a projective scheme.
\begin{thm}[Theorem 1 in \cite{GiuPus}]\label{201}
The moduli space $\mathcal{M}(X,G,\mathbb{L})$ is a projective scheme which co-represents the moduli problem $\mathcal{M}(X,G,\mathbb{L})_{func}$.
\end{thm}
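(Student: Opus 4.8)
The plan is to construct $\mathcal{M}(X,G,\mathbb{L})$ as a GIT quotient, adapting to the nodal curve $X$ the strategy used over smooth curves (Ramanathan for principal bundles, Nitsure and Simpson for Higgs sheaves) in the form developed by Schmitt for decorated sheaves on singular curves. Fix a faithful representation $\rho\colon G\hookrightarrow\mathrm{GL}(V)$. A principal $\mathbb{L}$-twisted Higgs $G$-bundle $(E,\Phi)$ is then equivalent to the data of the associated vector bundle $\mathcal{E}=E\times_\rho V$, a reduction of its structure group to $G$ (a section of the associated bundle with fibre $\mathrm{GL}(V)/G$), and the induced section of $\mathrm{End}(\mathcal{E})\otimes\mathbb{L}$. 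Because the target is to be \emph{projective}, the moduli space must also contain the boundary points coming from the natural degenerations of such data on a nodal curve --- torsion-free sheaves carrying Schmitt's ``singular principal bundle'' structure; the functor $\mathcal{M}(X,G,\mathbb{L})_{func}$ of honest Higgs $G$-bundles maps into this larger space, and co-representability only requires that this map be initial among morphisms to schemes.

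First I would prove boundedness: the isomorphism classes of semistable $\mathbb{L}$-twisted Higgs $G$-bundles with fixed rank and degree form a bounded family. On the nodal curve this proceeds as in the smooth case by bounding the Harder--Narasimhan type of $\mathcal{E}$, using that a strongly destabilizing subsheaf would produce a $\Phi$-invariant reduction violating the slope inequality of the definition. Boundedness allows the choice of $m\gg 0$ for which $\mathcal{E}(m)$ is globally generated with vanishing $H^1$, realizing all such $\mathcal{E}$ as quotients $\mathcal{O}_X(-m)^{\oplus N}\twoheadrightarrow\mathcal{E}$ parametrized by a subscheme of a Quot scheme; the Higgs field and the reduction of structure group are cut out by further locally closed conditions, giving a parameter scheme $\mathcal{R}$ with a $\mathrm{GL}(N)$-action whose orbits are the isomorphism classes. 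One then builds a $\mathrm{GL}(N)$-equivariant embedding of $\mathcal{R}$ into a projective space of decorated linear-algebra data, linearized by an ample line bundle depending on a stability parameter tuned to $\rho$ and $\mathbb{L}$.

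The technical heart is the Hilbert--Mumford computation identifying, for the correct linearization, GIT-semistability of a point of $\mathcal{R}$ with semistability of the corresponding Higgs $G$-bundle in the sense of the definition. One-parameter subgroups of $\mathrm{GL}(N)$ correspond to weighted filtrations of $\mathcal{E}$; through the faithful representation $\rho$ and the invariant-theoretic description of instability inside $G$ these become reductions of structure group to parabolic subgroups, and the numerical GIT inequality translates exactly into the slope inequality for the $\Phi$-invariant reductions, the Higgs field entering through the ``decoration''. On the nodal curve this calculation must be run with torsion-free sheaves and their restrictions to the two branches through the node $x_0$, which is where Schmitt's decorated-sheaf machinery, and the comparison with generalized parabolic structures on $\widetilde{X}$, enters.

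Finally, $\mathcal{M}(X,G,\mathbb{L}):=\mathcal{R}^{ss}/\!\!/\mathrm{GL}(N)$ is a projective scheme; as a good quotient it carries a universal map from the quotient functor, and pulling back along the classifying morphism $S\to\mathcal{R}^{ss}$ of a family yields the natural transformation $\mathcal{M}(X,G,\mathbb{L})_{func}\to\mathrm{Hom}(-,\mathcal{M}(X,G,\mathbb{L}))$ that is initial among maps to schemes, i.e. co-representability. Projectivity is formally automatic from GIT but encodes a real properness statement: the moduli problem must satisfy the valuative criterion, i.e. a Langton-style semistable reduction for Higgs bundles on the nodal curve. I expect this semistable reduction, together with the precise matching of GIT-semistability and Higgs-semistability on the singular curve, to be the main obstacle; the remaining steps are careful but essentially routine transcriptions of the smooth-curve theory.
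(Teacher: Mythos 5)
This statement is not proved in the paper at all: it is Theorem 1 of \cite{GiuPus}, imported verbatim as background, so there is no internal argument to compare yours against. Judged against the actual source, your outline does track the strategy Lo Giudice and Pustetto follow --- passage through a faithful representation to an associated vector (or torsion-free) sheaf, Schmitt's formalism of singular/decorated principal bundles to handle the nodal curve and the boundary, boundedness, a Quot-scheme parameter space with a $\mathrm{GL}(N)$-action, a GIT quotient, and a Hilbert--Mumford analysis identifying the two notions of semistability. Your observation that projectivity forces the moduli space to include degenerate objects beyond honest Higgs $G$-bundles, with co-representability only asserted for the map from the functor of genuine pairs, is also the correct reading of how the cited theorem is actually set up.

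That said, what you have written is a roadmap, not a proof. The two steps you yourself flag as the main obstacles are precisely the ones that carry all the content and that you do not carry out: (i) the Hilbert--Mumford computation on a nodal curve, where one-parameter subgroups produce weighted filtrations of a torsion-free sheaf whose restrictions to the branches at the node must be controlled, and where the translation from filtrations of $\mathcal{E}=E\times_\rho V$ back to parabolic reductions of the $G$-bundle requires the full instability theory for the representation $\rho$; and (ii) properness, which is not ``formally automatic from GIT'' unless one has already shown that the parameter scheme $\mathcal{R}^{ss}$ is proper over the relevant base, i.e.\ a Langton-type semistable reduction for decorated torsion-free sheaves on $X$. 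Boundedness for Higgs sheaves also needs a genuine argument (the Higgs field can a priori destabilize in unbounded ways unless one bounds the spectral data or the $\mu_{\max}$ of $\mathcal{E}$ using $\Phi$-invariance), and on a nodal curve this is done for torsion-free rather than locally free sheaves. So the proposal is a correct and well-informed plan consistent with the cited proof, but as a standalone argument it has real gaps exactly where it defers to ``Schmitt's machinery.''
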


In this paper, we focus on the case $G=\text{GL}(n,\mathbb{C})$, and consider the associated bundle $E \times_{\rho} V$ and the associated Higgs field $\Phi$. We use the same notation $(E,\Phi)$ for the associated Higgs bundle. Denote by $\mathcal{M}(X,n,d,\mathbb{L})$ the moduli space of isomorphism classes of semistable $\mathbb{L}$-twisted $\text{GL}(n,\mathbb{C})$-Higgs bundles $(E,\Phi)$ with rank $n$, degree $d$ over the nodal curve $X$.

\subsection{Generalized Parabolic Hitchin Pairs}
We review the definition and some properties of the generalized parabolic Hitchin pair in this subsection. Details can be found in \cite{Bho1,Bho2}.

Let $Y$ be an irreducible non-singular algebraic curve defined over an algebraically closed field $k$. Let $\mathbb{L}_Y$ be a fixed line bundle over $Y$. We fix $s$-many disjoint Cartier divisors $D_i$ on $Y$, $1 \leq i \leq s$. Let $D=\sum\limits_{i=1}^s D_i$. Let $E$ be a locally free sheaf over $Y$. Denote by $n$ and $d$ the rank and degree of $E$. In this paper, a holomorphic bundle over $Y$ is exactly a locally free sheaf. Sometimes, we abuse the terminology.

A \emph{generalized parabolic $\mathbb{L}_Y$-twisted Hitchin pair} (GPH) of rank $n$ and degree $d$ on $(Y,D)$ is a tuple $(E,F(E),\alpha,\Phi)$, where
\begin{enumerate}
\item[(1)] $E$ is a locally free sheaf on $Y$ with rank $n$ and degree $d$,
\item[(2)] $F(E)=(F_1(E),\dots,F_s(E))$ is a $s$-tuple such that $F_i(E) \subseteq E \otimes \mathcal{O}_{D_i}$,
\item[(3)] $\alpha=(\alpha_1,\dots,\alpha_s)$ is a $s$-tuple of rational numbers $0 \leq \alpha_i < 1$, $1 \leq i \leq s$, and $\alpha_i$ is known as the weight of the filtration given in condition (2),
\item[(4)] $\Phi : E \rightarrow E \otimes \mathbb{L}_Y$ is a homomorphism preserving the filtration, i.e. $\Phi(F_i(E)) \subseteq F_i(E) \otimes \mathbb{L}_Y$.
\end{enumerate}
Condition $(4)$ is exactly the definition of homomorphism between (generalized) parabolic bundles \cite{Yoko2}. Compared with the homomorphisms of holomorphic bundles, parabolic homomorphisms need to preserve the parabolic structures. More precisely, let $\text{ParEnd}(E)$ be the set of parabolic homomorphisms of the generalized parabolic bundle $E$ and let $E_{D_i}=E \otimes \mathcal{O}_{D_i}$. Define $P_{D_i}(E,E)$ to be the subspace of $\text{End}(E_{D_i},E_{D_i})$ consisting of maps preserving the filtration over $D_i$. We have
\begin{align*}
0 \rightarrow \text{ParEnd}(E) \rightarrow \text{End}(E) \rightarrow \text{End}(E_D,E_D)/ P_D(E,E) \rightarrow 0,
\end{align*}
where $E_D= \bigoplus\limits_{i=1}^s E_{D_i}$ and $P_D(E,E)=\bigoplus\limits_{i=1}^s P_{D_i}(E,E)$.

A \emph{generalized parabolic bundle} is a pair $(E,F(E))$ satisfying conditions (1) and (2) \cite{Bho1}. In this paper, we assume that all weights $\alpha_i$ are the same. Denote by $\alpha:=\alpha_1=\dots=\alpha_s$. Usually, we consider $GPH$ as a triple $(E,F(E),\Phi)$ with a given weight $\alpha$. Let $f_i(E)= \dim F_i(E)$ be the dimension of the filtration. We define the \emph{weight} $wt(E)$, and \emph{parabolic degree} $\text{par}\deg(E)$ of the locally free sheaf $E$ as follows
\begin{align*}
wt(E)=\alpha  \sum_{i=1}^s f_i(E), \quad \text{par}\deg(E)=d(E)+wt(E).
\end{align*}
The parabolic slope $\text{par}\mu$ is defined by
\begin{align*}
\text{par}\mu(E)=\frac{\text{par}\deg(E)}{r_{E}}.
\end{align*}
A parabolic bundle $E'$ is a \emph{parabolic subbundle} of $E$, if $E'$ is a subbundle of $E$, and its filtration $F_i(E')$ satisfies $F_i(E')=F_i(E) \cap (E' \otimes \mathcal{O}_{D_j})$ such that the weights $\alpha_i(E')=\alpha_i(E)$. It is called a \emph{$\Phi$-invariant subbundle}, if $\Phi(E') \subseteq E' \otimes \mathbb{L}_Y$.

A generalized parabolic bundle $(E,F(E))$ is called \emph{stable (resp. semistable)}, if for every proper subbundle $E' \subseteq E$, we have
\begin{align*}
\text{par}\mu(E') < \text{par}\mu(E), \quad \text{ (resp. $\leq$) }.
\end{align*}
Denote by $\mathcal{M}_{par}(Y,n,d)$ the moduli space of isomorphism classes of semistable generalized parabolic bundles $(E,F(E))$ with rank $n$, parabolic degree $d$ over the smooth curve $Y$. The existence of the moduli space $\mathcal{M}_{par}(Y,n,d)$ is given in \cite{Bho1} Theorem 1 and Theorem 3.

A GPH $(E,F(E),\Phi)$ is \emph{$\alpha$-stable (resp. $\alpha$-semistable)}, if for every proper $\Phi$-invariant subbundle $E' \subseteq E$, we have
\begin{align*}
\text{par}\mu(E') < \text{par}\mu(E), \quad \text{ (resp. $\leq$) }.
\end{align*}
A GPH $(E,F(E),\Phi)$ is \emph{stable (resp. semistable)}, if it is $1$-stable (resp. $1$-semistable). Denote by $\mathcal{M}_{par}(Y,n,d,\alpha,\mathbb{L}_Y)$ the moduli space of isomorphism classes of $\alpha$-semistable $\mathbb{L}_Y$-twisted generalized parabolic Hitchin pairs (GPH) $(E,F(E),\Phi)$ with rank $n$, parabolic degree $d$ over the smooth curve $Y$. Moreover, denote by $\mathcal{M}_{par}(Y,n,d,\mathbb{L}_Y)$ the moduli space of semistable $\mathbb{L}_Y$-twisted generalized parabolic Hitchin pairs (GPH) with the same condition as above.

The existence of the moduli space $\mathcal{M}_{par}(Y,n,d,\alpha,\mathbb{L}_Y)$ of GPH is given by Bhosle \cite{Bho2}.
\begin{thm}[Theorem 4.8 in \cite{Bho2}]\label{202}
Let $Y$ be a smooth algebraic curve of genus $g$. We fix a line bundle $\mathbb{L}_Y$ and a rational number $\alpha$, $0 < \alpha \leq 1$. Let $D_j$, $1 \leq j \leq s$, be finitely many distinct divisors on $X$. There exists a moduli space $\mathcal{M}_{par}(Y,n,d,\alpha,\mathbb{L}_Y)$ of isomorphism classes of $\alpha$-semistable $\mathbb{L}_Y$-twsited GPH $(E,F(E),\Phi)$, where $E$ is a holomorphic bundle of rank $n$, degree $d$ with following filtration
\begin{align*}
E \otimes \mathcal{O}_{D_j} \supset F_j(E) \supset 0,
\end{align*}
and associated weights $(0,\alpha)$, $1 \leq j \leq s$, and $\Phi: E \rightarrow E \otimes \mathbb{L}_Y$ is a homomorphism of parabolic bundles. The moduli space $\mathcal{M}_{par}(Y,n,d,\mathbb{L}_Y)$ is a projective scheme.
\end{thm}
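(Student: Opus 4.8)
The plan is to construct $\mathcal{M}_{par}(Y,n,d,\alpha,\mathbb{L}_Y)$ by Geometric Invariant Theory, enlarging the GIT construction of the moduli space $\mathcal{M}_{par}(Y,n,d)$ of generalized parabolic bundles in \cite{Bho1} so as to carry the Hitchin field along, exactly as Nitsure \cite{Nit} enlarges the Quot-scheme construction of moduli of bundles to moduli of Hitchin pairs (compare also \cite{BisRam}, and Schmitt's theory of decorated bundles for the projectivity of the parameter space). The first step is \emph{boundedness}: the family of locally free sheaves $E$ of rank $n$ and degree $d(E)=d-\alpha\sum_j f_j$ (with fixed $f_j=\dim F_j(E)$) underlying an $\alpha$-semistable $\mathbb{L}_Y$-twisted $GPH$ $(E,F(E),\Phi)$ is bounded. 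Indeed, for a subsheaf $E'\subseteq E$ let $\langle E'\rangle_\Phi$ be the smallest $\Phi$-invariant subsheaf of $E$ containing $E'$; it is reached from $E'$ in at most $n-1$ steps $E_i'\mapsto E_i'+\Phi(E_i')\otimes\mathbb{L}_Y^{-1}$, so $\mu(E')\le\mu(\langle E'\rangle_\Phi)+(n-1)\deg\mathbb{L}_Y$, while $\text{par}\mu(\langle E'\rangle_\Phi)\le\text{par}\mu(E)$ by $\alpha$-semistability and the weight term is at most $\alpha\sum_j f_j$; hence $\mu_{\max}(E)$ is bounded above and these $E$ form a bounded family. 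Thus I may fix $m$ so that for all such $E$ the twist $E(m)=E\otimes\mathcal{O}_Y(m)$ is globally generated with $H^1(Y,E(m))=0$ and $h^0(Y,E(m))=P$ a fixed number, and put $V=k^{P}$.

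Next I would build the parameter scheme. Let $\mathcal{Q}$ be the open subscheme of $\mathrm{Quot}(V\otimes\mathcal{O}_Y(-m),P)$ of quotients $q\colon V\otimes\mathcal{O}_Y(-m)\twoheadrightarrow E$ with $E$ locally free and $H^0(q(m))$ an isomorphism, with universal quotient $\mathcal{E}$ on $Y\times\mathcal{Q}$, and let $\mathrm{Fl}\to\mathcal{Q}$ be the relative flag bundle whose fibre over $q$ chooses, for each $j$, a subspace $F_j(E)\subseteq E\otimes\mathcal{O}_{D_j}$ of dimension $f_j$; this $\mathrm{Fl}$ is the parameter space of the construction in \cite{Bho1}. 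To include $\Phi$ I would follow \cite{Nit}: after a large auxiliary twist the relevant direct image of $\mathcal{H}om(\mathcal{E},\mathcal{E}\otimes p_Y^*\mathbb{L}_Y)$ (with $p_Y$ the projection to $Y$) becomes locally free, so a Hitchin field on $\mathcal{E}$ is a section of a vector bundle over $\mathrm{Fl}$, cut out there by a closed vanishing condition, and the parabolic invariance $\Phi(F_j(E))\subseteq F_j(E)\otimes\mathbb{L}_Y$ is a further closed condition. Taking Schmitt's relative projective completion in the Higgs-field directions (which also makes $\Phi=0$ a legitimate point) yields a \emph{projective} $\mathcal{Q}$-scheme $\mathcal{R}$ carrying an action of $\mathrm{SL}(V)$, whose orbits are the isomorphism classes of $GPH$ together with a choice of basis of $H^0(E(m))$.

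The decisive step is the linearization and the Hilbert--Mumford computation, and I expect it to be the main obstacle. I would polarize $\mathcal{R}$ by an $\mathrm{SL}(V)$-linearized ample bundle $\mathcal{L}_{a,b}=\mathcal{L}_{par}^{\otimes a}\otimes\mathcal{L}_{\Phi}^{\otimes b}$ with $0<b\ll a$, where $\mathcal{L}_{par}$ is the polarization of \cite{Bho1} (a Grothendieck--Pl\"ucker bundle of $\mathcal{Q}$ evaluated far along the Hilbert polynomial, tensored with the Pl\"ucker bundles of the flag factors weighted by $\alpha$) and $\mathcal{L}_{\Phi}$ is the tautological bundle on the projectivized Higgs-field directions. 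A one-parameter subgroup of $\mathrm{SL}(V)$ induces a weighted filtration $0=E_0\subsetneq E_1\subsetneq\cdots\subsetneq E_r=E$, and the Mumford weight should split, up to a positive constant, as $a\cdot(\text{a sum of }\text{par}\mu(E)-\text{par}\mu(E_i)\text{ type terms})+b\cdot(\text{a nonnegative term that vanishes exactly when every }E_i\text{ is }\Phi\text{-invariant})$: the first summand is the computation already carried out in \cite{Bho1}, corrected for the \emph{generalized} parabolic structure (a quotient at each $D_j$, not an honest flag), and the second is the obstruction to $\Phi$ preserving the filtration, exactly as in \cite[Theorem 2.3]{BisRam}. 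Choosing $b$ small relative to any strict parabolic gap, a point of $\mathcal{R}$ is then GIT-semistable for $\mathcal{L}_{a,b}$ if and only if $\text{par}\mu(E')\le\text{par}\mu(E)$ for every $\Phi$-invariant subbundle $E'\subseteq E$, i.e.\ if and only if the underlying $GPH$ is $\alpha$-semistable, and similarly for stability. The hard part is precisely this numerical analysis: one must track simultaneously the Quot-scheme weight, the $\alpha$-weighted generalized-parabolic contributions at each $D_j$, and the Higgs obstruction term, and verify that a single window of $(a,b)$ is compatible with $\alpha$-(semi)stability for every destabilizing filtration at once.

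Granting this, GIT produces a good quotient $\mathcal{R}^{ss}/\!\!/\mathrm{SL}(V)$, projective because $\mathcal{R}$ is, corepresenting the moduli functor of $\alpha$-semistable $\mathbb{L}_Y$-twisted $GPH$ of rank $n$ and parabolic degree $d$, with closed points the $S$-equivalence classes of such $GPH$; this is the asserted $\mathcal{M}_{par}(Y,n,d,\alpha,\mathbb{L}_Y)$, and for $\alpha=1$ one obtains the projective scheme $\mathcal{M}_{par}(Y,n,d,\mathbb{L}_Y)$. The closed locus where $\Phi=0$ recovers the moduli space $\mathcal{M}_{par}(Y,n,d)$ of \cite{Bho1} inside it, which is a consistency check. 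If one prefers a more self-contained argument for the completeness of the semistable locus, I would add a Langton-type semistable-reduction step for $GPH$: over a punctured disc, extend the bundle keeping it semistable by Langton's procedure, extend $\Phi$ using properness of the direct image, and extend the flags by completeness of the relative flag bundle; but properness, and hence projectivity, is already formal once $\mathcal{R}$ is projective.
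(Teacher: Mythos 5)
This statement is not proved in the paper at all: it is quoted verbatim as Theorem 4.8 of Bhosle's paper \cite{Bho2} and used as black-box background, so there is no internal proof to measure your argument against. Judged against the actual source, your outline is the right strategy and is essentially what Bhosle does: bound the family, parametrize by a Quot scheme enriched with the Grassmannian of subspaces $F_j(E)\subseteq E\otimes\mathcal{O}_{D_j}$ (you correctly note this is a generalized parabolic datum at each divisor $D_j$, not a pointwise flag), carry the Hitchin field as a section of a direct-image bundle following \cite{Nit}, and take a GIT quotient by $\mathrm{SL}(V)$ with a polarization mixing the parabolic and Higgs directions.

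The gap is the one you yourself flag: the entire content of the theorem is the Hilbert--Mumford computation identifying GIT-(semi)stability for $\mathcal{L}_{a,b}$ with $\alpha$-(semi)stability of the GPH, and you only assert the expected shape of the weight rather than establishing it. In particular, since $\alpha$-semistability tests only $\Phi$-invariant subbundles, you must show that a one-parameter subgroup whose associated filtration is \emph{not} $\Phi$-invariant cannot destabilize --- i.e.\ that the Higgs contribution to the Mumford weight is strictly positive in that case and dominates for the chosen $(a,b)$ uniformly over all destabilizing filtrations --- and that a single choice of $(a,b)$ and of the twist $m$ works for the whole bounded family. Your boundedness inequality also elides how the weight term $\alpha\sum_j f_j$ enters $\mu_{\max}$, though that is routine. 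As a reconstruction of the cited proof your plan is sound; as a self-contained proof it stops exactly where the work begins.
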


It is well known that studying generalized parabolic Hitchin pair is closely related to the study of the Hitchin pair over the nodal curve \cite{Bho2}. Here is a brief review of this relation. Let $X$ be an integral projective nodal curve and $\widetilde{X}$ its normalization. Let $\nu :\widetilde{X} \rightarrow X$ be the normalization map. Let $x_1,\dots,x_s$ be the nodes of $X$. We define the divisor $D_i \subseteq \widetilde{X}$ as the preimage of $x_i$. Clearly, $D_i$ is the sum of two points. Let $\widetilde{\mathcal{O}}_{x_i}$ be the normalization of the local ring $\mathcal{O}_{x_i}$ at $x_i$. In this case, it is easy to check that $\dim(\widetilde{\mathcal{O}}_{x_i}/ \mathcal{O}_{x_i})=1$. Given a line bundle $\mathbb{L}$ on $X,$ define $\widetilde{\mathbb{L}}=\nu^* \mathbb{L}$. Let $(\widetilde{E},F(\widetilde{E}),\Phi_{\widetilde{E}})$ be a GPH with weight $\alpha$ over $(\widetilde{X},D=\sum\limits_{i=1}^s D_i)$. We take
\begin{align*}
f_j(\widetilde{E})=r(\widetilde{E}).
\end{align*}
A GPH $(\widetilde{E},F(\widetilde{E}),\Phi_{\widetilde{E}})$ is \emph{good}, if it satisfies the following conditions:
\begin{itemize}
\item[(1)] the space $F_i(\widetilde{E})$ is an $\mathcal{O}_{x_i}$-sub-module of $\nu_*(\widetilde{E} \otimes \mathcal{O}_{D_j})$,
\item[(2)] we have $\nu_*(\widetilde{\Phi}) (\nu_*(F_i(\widetilde{E}))) \subseteq \nu_*(F_i(\widetilde{E})) \otimes \mathbb{L}_{x_i}$, $1 \leq i \leq s$.
\end{itemize}
Note that the definition of good GPH does not depend on the weight $\alpha$. The good GPH forms a closed subscheme $\mathcal{M}_{par}^{good}(\widetilde{X},n,d,\widetilde{\mathbb{L}})$ of $\mathcal{M}_{par}(\widetilde{X},n,d,\widetilde{\mathbb{L}})$.

There is a one-to-one correspondence between good $\widetilde{\mathbb{L}}$-twisted GPHs over $\widetilde{X}$ and $\mathbb{L}$-twisted Hitchin pairs over the nodal curve $X$.
\begin{prop}[Proposition 2.8 in \cite{Bho2}]\label{203}
We forget the weight $\alpha$ in this proposition.
\begin{enumerate}
\item[(1)] A $\widetilde{\mathbb{L}}$-twisted good GPH $(\widetilde{E},F(\widetilde{E}),\Phi_{\widetilde{E}})$ of rank $n$, degree $d$ on $\widetilde{X}$ defines a $\mathbb{L}$-twisted Hitchin pair $(E,\Phi)$ of rank $n$ and degree $d$ on $X$.
\item[(2)] If $(E,\Phi)$ is a $\mathbb{L}$-twisted Hitchin pair on $X$, then $(E,\Phi)$ determines a $\widetilde{\mathbb{L}}$-twisted good GPH $(\widetilde{E},F(\widetilde{E}),\Phi_{\widetilde{E}})$ on $\widetilde{X}$.
\end{enumerate}
\end{prop}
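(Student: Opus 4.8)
\emph{Proof proposal.} Since no stability condition enters the statement, I may work purely at the level of objects and, as the statement itself allows, ignore the weight $\alpha$ throughout. The plan is to write down the two constructions explicitly and then check that each is well defined and that they are mutually inverse.

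\emph{From a Hitchin pair to a good GPH.} Given $(E,\Phi)$ on $X$ of rank $n$ and degree $d$, I put $\widetilde{E}:=\nu^*E$ and $\Phi_{\widetilde{E}}:=\nu^*\Phi\colon\widetilde{E}\to\widetilde{E}\otimes\nu^*\mathbb{L}=\widetilde{E}\otimes\widetilde{\mathbb{L}}$; then $\widetilde{E}$ again has rank $n$ and degree $d$. As $D_i=\nu^{-1}(x_i)$ and $\widetilde{E}$ is a pullback, $\widetilde{E}\otimes\mathcal{O}_{D_i}$ is canonically the direct sum of two copies of the fibre $E_{x_i}$, one for each point of $D_i$; I take $F_i(\widetilde{E})$ to be the diagonal, which has dimension $n=r(\widetilde{E})$. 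The restriction of $\Phi_{\widetilde{E}}$ to $D_i$ is the single endomorphism $\Phi_{x_i}$ acting diagonally, so it preserves $F_i(\widetilde{E})$; and the good conditions hold because $\widetilde{E}$, $F(\widetilde{E})$, $\Phi_{\widetilde{E}}$ all descend from $X$: $\nu_*F_i(\widetilde{E})$ is the $\mathcal{O}_{x_i}$-module $E_{x_i}$ inside $\nu_*(\widetilde{E}\otimes\mathcal{O}_{D_i})$, and it is carried into $\nu_*F_i(\widetilde{E})\otimes\mathbb{L}_{x_i}$ by $\nu_*\Phi_{\widetilde{E}}$.

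\emph{From a good GPH to a Hitchin pair.} Given a good GPH $(\widetilde{E},F(\widetilde{E}),\Phi_{\widetilde{E}})$ with $\deg\widetilde{E}=d$, I define $E$ to be the subsheaf of $\nu_*\widetilde{E}$ cut out at the nodes by the filtration, i.e.\ the kernel of the composite
\[
\nu_*\widetilde{E}\twoheadrightarrow\nu_*(\widetilde{E}\otimes\mathcal{O}_D)\twoheadrightarrow\bigoplus_{i}\nu_*\big((\widetilde{E}\otimes\mathcal{O}_{D_i})/F_i(\widetilde{E})\big).
\]
Away from the nodes $\nu$ identifies $E$ with $\widetilde{E}$; at $x_i$ the full-rank hypothesis $f_i(\widetilde{E})=n$ together with good condition (1) (the $\mathcal{O}_{x_i}$-module structure of $F_i(\widetilde{E})$) forces $F_i(\widetilde{E})$ to be the graph of a linear isomorphism between the two branch-fibres of $\widetilde{E}$, which is exactly the descent datum making $E_{x_i}$ free of rank $n$ over $\mathcal{O}_{x_i}$; hence $E$ is locally free of rank $n$, and since $\chi(\mathcal{O}_{\widetilde{X}})=\chi(\mathcal{O}_X)+s$ a short Euler-characteristic computation gives $\deg E=d$. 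For the Higgs field I push $\Phi_{\widetilde{E}}$ forward: by the projection formula $\nu_*\Phi_{\widetilde{E}}$ is a map $\nu_*\widetilde{E}\to(\nu_*\widetilde{E})\otimes\mathbb{L}$, and good condition (2), namely $\nu_*\Phi_{\widetilde{E}}(\nu_*F_i(\widetilde{E}))\subseteq\nu_*F_i(\widetilde{E})\otimes\mathbb{L}_{x_i}$, together with the automatic inclusion $\Phi_{\widetilde{E}}(\widetilde{E}(-D_i))\subseteq\widetilde{E}(-D_i)\otimes\widetilde{\mathbb{L}}$, shows that $\nu_*\Phi_{\widetilde{E}}$ carries $E$ into $E\otimes\mathbb{L}$; I set $\Phi$ to be this restriction.

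\emph{Mutual inverseness and the main difficulty.} Starting from $(E,\Phi)$ on $X$, the subsheaf of $\nu_*\nu^*E$ cut out by the diagonal filtration is $E$ itself — the diagonal is precisely the locus where the two branch-values agree, i.e.\ the image of $E$ under $E\hookrightarrow\nu_*\nu^*E$ — and the restricted Higgs field is $\Phi$. Conversely, starting from a good GPH, the adjunction morphism $\nu^*E\to\widetilde{E}$ is an isomorphism because $F_i(\widetilde{E})$ projects isomorphically onto each branch-fibre, and under this identification the diagonal filtration on $\nu^*E\otimes\mathcal{O}_{D_i}$ is carried back to $F_i(\widetilde{E})$ (the graph of the gluing isomorphism it encodes), while $\nu^*\Phi=\Phi_{\widetilde{E}}$ once more by adjunction. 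I expect the main obstacle to be the node-local analysis in the second construction: one must show that "good" is exactly the condition that separates a locally free $E$ from a merely torsion-free one and, simultaneously, that condition (2) is precisely what makes $\Phi_{\widetilde{E}}$ descend. This is a computation with the local rings $\mathcal{O}_{x_i}\subset\widetilde{\mathcal{O}}_{x_i}$, and it is where the hypotheses $f_i(\widetilde{E})=n$ and the $\mathcal{O}_{x_i}$-linearity of $F_i(\widetilde{E})$ do the real work.
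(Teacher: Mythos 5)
This proposition is imported verbatim from Bhosle \cite{Bho2} (Proposition 2.8) as background; the paper contains no proof of it, so there is no internal argument to measure yours against. That said, your two constructions --- $\widetilde{E}=\nu^*E$ with the diagonal filtration in one direction, and $E=\ker\bigl(\nu_*\widetilde{E}\to\bigoplus_i\nu_*((\widetilde{E}\otimes\mathcal{O}_{D_i})/F_i(\widetilde{E}))\bigr)$ with the restricted Higgs field in the other --- are the standard ones and agree with what Bhosle actually does.

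There is, however, a genuine gap in your second construction, at precisely the point you yourself flag as ``where the real work happens.'' You assert that $f_i(\widetilde{E})=n$ together with good condition (1) forces $F_i(\widetilde{E})$ to be the graph of a linear isomorphism between the two branch fibres, hence that $E$ is locally free. This is false. For a simple node the conductor $\mathfrak{m}_{x_i}\widetilde{\mathcal{O}}_{x_i}$ equals the ideal of $D_i$, so $\mathcal{O}_{x_i}$ acts on $\nu_*(\widetilde{E}\otimes\mathcal{O}_{D_i})=\widetilde{E}_{\widetilde{x}_1}\oplus\widetilde{E}_{\widetilde{x}_2}$ through its residue field, diagonally; hence \emph{every} $k$-subspace is an $\mathcal{O}_{x_i}$-submodule and condition (1) imposes no constraint at all. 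The choice $F_i(\widetilde{E})=\widetilde{E}_{\widetilde{x}_1}\oplus 0$ is $n$-dimensional and satisfies all the stated hypotheses, yet the resulting $E$ is torsion-free but not locally free at $x_i$. In Bhosle's formulation the correspondence is with Hitchin pairs whose underlying sheaf is merely torsion-free; the locally free ones correspond exactly to the open locus of graph-type filtrations, which is why the induced map of moduli spaces in Theorem \ref{204} is only birational and not an isomorphism. Correspondingly, your inverse construction handles only locally free $E$ (for torsion-free $E$ one must take $\widetilde{E}=\nu^*E/\mathrm{torsion}$, and the filtration is no longer the diagonal), so as written your two maps are mutually inverse only over the locally free/graph locus. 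To repair the argument you must either restrict both sides of the statement to that locus, or redo the node-local analysis for general $n$-dimensional $F_i$ and general torsion-free $E$.
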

This correspondence induces a birational morphism between $\mathcal{M}_{par}^{good}(\widetilde{X},n,d,\widetilde{\mathbb{L}})$ and $\mathcal{M}(X,n,d,\mathbb{L})$.
\begin{thm}[Theorem 1.2 in \cite{Bho2}]\label{204}
There exists a birational morphism
\begin{align*}
\mathcal{M}_{par}^{good}(\widetilde{X},n,d,\widetilde{\mathbb{L}}) \rightarrow \mathcal{M}(X,n,d,\mathbb{L})
\end{align*}
from the moduli space of $\widetilde{\mathbb{L}}$-twisted semistable good GPH on $\widetilde{X}$ to the moduli space of semistable $\mathbb{L}$-twisted Hitchin pairs on $X$.
\end{thm}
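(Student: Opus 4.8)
The plan is to promote the pointwise correspondence of Proposition~\ref{203} to a morphism of the coarse moduli schemes, using that both schemes co-represent their moduli functors (Theorems~\ref{201} and~\ref{202}), and then to exhibit a common dense open subscheme on which this morphism is an isomorphism.

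First I would make the assignment in Proposition~\ref{203}(1) work in families. Given a scheme $S$ over $\mathbb{C}$ and an $S$-flat family $(\widetilde{\mathcal{E}},F(\widetilde{\mathcal{E}}),\Phi_{\widetilde{\mathcal{E}}})$ of semistable good $\widetilde{\mathbb{L}}$-twisted GPHs on $\widetilde{X}\times S$, I would set $\mathcal{E}=\ker\bigl(\nu_{S,*}\widetilde{\mathcal{E}}\to \nu_{S,*}(\widetilde{\mathcal{E}}\otimes\mathcal{O}_{D})/F(\widetilde{\mathcal{E}})\bigr)$ on $X\times S$, the relative version of the construction underlying Proposition~\ref{203}(1), where $\nu_S=\nu\times\mathrm{id}_S$. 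The goodness hypotheses---$F_i$ an $\mathcal{O}_{x_i}$-submodule and $\nu_*\Phi$ preserving $\nu_*F_i$---ensure that $\Phi_{\widetilde{\mathcal{E}}}$ descends to a homomorphism $\Phi\colon\mathcal{E}\to\mathcal{E}\otimes\mathbb{L}$. Two points then need checking: (i) $\mathcal{E}$ is $S$-flat and its formation commutes with arbitrary base change $S'\to S$, which follows from $\dim_k(\widetilde{\mathcal{O}}_{x_i}/\mathcal{O}_{x_i})=1$ together with the normalization $f_i(\widetilde{E})=r(\widetilde{E})$, forcing the relevant cokernel sheaf to be $S$-flat of the expected length; and (ii) the resulting pair $(\mathcal{E}_s,\Phi_s)$ on $X$ is semistable for every $s\in S$, which is Bhosle's comparison of parabolic slopes on $\widetilde{X}$ with slopes on $X$. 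This yields a natural transformation $\mathcal{M}_{par}^{good}(\widetilde{X},n,d,\widetilde{\mathbb{L}})_{func}\to\mathcal{M}(X,n,d,\mathbb{L})_{func}$, hence---since the target co-represents its functor---a morphism of schemes whose value on closed points is the map of Proposition~\ref{203}(1).

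For birationality, I would look at the locus $U\subseteq\mathcal{M}_{par}^{good}(\widetilde{X},n,d,\widetilde{\mathbb{L}})$ of good GPHs for which each $F_i(\widetilde{E})\subseteq\widetilde{E}_{\widetilde{x}_{i,1}}\oplus\widetilde{E}_{\widetilde{x}_{i,2}}$ is the graph of a linear isomorphism $\widetilde{E}_{\widetilde{x}_{i,1}}\xrightarrow{\ \sim\ }\widetilde{E}_{\widetilde{x}_{i,2}}$. Being a graph is an open condition, and a generic $r$-dimensional subspace of a $2r$-dimensional direct sum of two $r$-dimensional spaces is such a graph, so $U$ is dense open. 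For $(\widetilde{E},F(\widetilde{E}),\Phi_{\widetilde{E}})\in U$ the sheaf $\mathcal{E}$ constructed above is locally free on $X$, i.e. the morphism maps $U$ into the dense open $V\subseteq\mathcal{M}(X,n,d,\mathbb{L})$ where $\mathcal{E}$ is a vector bundle. Conversely, for a locally free $\mathbb{L}$-twisted Hitchin pair $(E,\Phi)$ the pullback $\widetilde{E}=\nu^*E$, equipped with the filtration determined by the gluing isomorphism of $E$ at each node and with $\Phi_{\widetilde{E}}=\nu^*\Phi$, is a good GPH lying in $U$ (Proposition~\ref{203}(2)); this construction is again functorial, so by co-representability it defines a morphism $V\to U$. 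The two morphisms are mutually inverse on points by Proposition~\ref{203}, hence inverse as morphisms of these reduced schemes, so the morphism restricts to an isomorphism $U\xrightarrow{\ \sim\ }V$ of dense open subschemes; this is exactly the asserted birationality.

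The step I expect to be the main obstacle is (i)--(ii) above: checking that $\nu_*$, the kernel construction cut out by the filtration, and the descent of the Higgs field all behave well in flat families and commute with base change, and that semistability is genuinely preserved in both directions, so that the pointwise bijection of Proposition~\ref{203} really lifts to morphisms of moduli schemes. By contrast, the density of $U$ is a soft genericity statement about filtrations over the nodes, and once functoriality is in hand the passage to schemes is formal from co-representability.
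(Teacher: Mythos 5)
This theorem is not proved in the paper at all: it is imported verbatim as Theorem 1.2 of \cite{Bho2}, so there is no in-paper argument to compare against. Your outline is, as far as I can tell, essentially Bhosle's own strategy --- make the pointwise correspondence of Proposition \ref{203} functorial in families, use co-representability (note you need it for \emph{both} the source and the target: the transformation $\mathcal{M}_{par}^{good}{}_{func}\to\mathcal{M}(X,n,d,\mathbb{L})_{func}\to h_{\mathcal{M}(X,n,d,\mathbb{L})}$ induces the morphism via the universal property of the \emph{source}), and then match the graph-type filtrations with the locally free Hitchin pairs to get the birational inverse --- and you correctly identify flatness/base change of the kernel construction and preservation of ($\alpha=1$)-semistability as the real technical content.

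Two caveats worth recording. First, for the morphism to be defined on all of $\mathcal{M}_{par}^{good}(\widetilde{X},n,d,\widetilde{\mathbb{L}})$ and not just on your open locus $U$, the target $\mathcal{M}(X,n,d,\mathbb{L})$ must parametrize Hitchin pairs with \emph{torsion-free} (not necessarily locally free) underlying sheaf, since the kernel of $\nu_*\widetilde{\mathcal{E}}\to\nu_*(\widetilde{\mathcal{E}}\otimes\mathcal{O}_D)/F(\widetilde{\mathcal{E}})$ fails to be locally free at a node exactly when $F_i$ is not the graph of an isomorphism; this is how the compactified moduli space of \cite{GiuPus} and \cite{Bho2} is set up, but your write-up should say so explicitly. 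Second, openness of the graph condition in the Grassmannian gives that $U$ is open, but its \emph{density} in $\mathcal{M}_{par}^{good}$ (and likewise density of the locally free locus $V$ in the target) needs an irreducibility or specialization argument on the moduli space itself, not just on the fibre of filtrations over a fixed bundle; as stated that step is a genuine, if standard, gap.
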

Bhosle discussed in detail the relation between $\alpha$-semistable $\widetilde{\mathbb{L}}$-twisted good GPH on $\widetilde{X}$ and semistable $\widetilde{L}$-twisted Hitchin pair on $X$ for any $\alpha \in (0,1]$. In this paper, we shall only consider the case $\alpha=1$.

\subsection{Infinitesimal Deformation of Hitchin Pair over Nonsingular Algebraic Curve}
Let $(E,\Phi)$ be a $\mathbb{L}_Y$-twisted Higgs bundle over a nonsingular projective curve $Y$. An \emph{infinitesimal deformation} of the Hitchin pair $(E,\Phi)$ is a pair $(E',\Phi')$ over $Y \times \Spec \mathbb{C}[\varepsilon]/(\varepsilon^2)$ with an isomorphism of the restriction to $\mathfrak{m} \times Y$, where $\mathfrak{m}$ is the closed point of $\mathbb{C}[\varepsilon]/(\varepsilon^2)$. Now we consider the $\mathbb{L}_Y$-twisted Higgs bundle $E[\varepsilon]=E \times \Spec \mathbb{C}[\varepsilon]/(\varepsilon^2)$. The automorphisms of $E[\varepsilon]$ which induce identity over the closed point is $\text{End}(E)$. Therefore for a section $s$ of $\text{End}(E)$, the corresponding automorphism of $E[\varepsilon]$ is denoted by $1+s\varepsilon$. Moreover, if $v+w\varepsilon$ is a section of $(\text{End}(E) \otimes \mathbb{L}_Y)[\varepsilon]$, we have
\begin{align*}
\rho(1+s\varepsilon)(v+w\varepsilon)=v+w\varepsilon+\rho(s)(v)\varepsilon,
\end{align*}
where $\rho$ is natural action of $\text{End}(E)$ on $E$. The deformation complex $C^{\bullet}$ is defined as follows
\begin{align*}
C^{\bullet}: C^0 = \text{End}(E) \xrightarrow{e(\Phi)} C^1 = \text{End}(E) \otimes \mathbb{L}_Y,
\end{align*}
where the map $e(\Phi)$ is given by
\begin{align*}
e(\Phi)(s)=-\rho(s)(\Phi).
\end{align*}
The authors in \cite{BisRam} used this complex to calculate the space of infinitesimal deformations of the Hitchin pair $(E,\Phi)$ over a nonsingular algebraic curve $Y$.
\begin{thm}[Theorem 2.3 in \cite{BisRam}]\label{205}
The space of infinitesimal deformations of a given $\mathbb{L}_Y$-twisted Hitchin pair $(E,\Phi)$ over $Y$ is isomorphic to the first hypercohomology group $\mathbb{H}^1(C^{\bullet})$ of the complex $C^{\bullet}$.
\end{thm}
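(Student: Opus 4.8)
The plan is to compute the first hypercohomology $\mathbb{H}^1(C^\bullet)$ using the \v{C}ech total complex associated to an affine open cover of $Y$ on which $E$ trivializes, and to match its cocycles and coboundaries directly with the gluing data of infinitesimal deformations of the pair $(E,\Phi)$. Fix such a cover $\mathcal{U}=\{U_i\}$, and recall that the associated total complex has degree-one term $C^1(\mathcal{U},C^0)\oplus C^0(\mathcal{U},C^1)$, with total differential $D=\delta\pm e(\Phi)$ combining the \v{C}ech coboundary $\delta$ with the complex differential $e(\Phi)$. Thus a total $1$-cocycle is a pair $(\{a_{ij}\},\{\psi_i\})$ with $a_{ij}\in\text{End}(E)(U_{ij})$ and $\psi_i\in(\text{End}(E)\otimes\mathbb{L}_Y)(U_i)$ satisfying $\delta a=0$ and $\delta\psi=\pm e(\Phi)(a)$.

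First I would produce the cocycle from a deformation. Since $Y$ is smooth and $E$ is locally free, over each affine $U_i$ the restriction $E'|_{U_i\times\Spec\mathbb{C}[\varepsilon]/(\varepsilon^2)}$ is trivial as a deformation, so I may choose trivializations extending those of $E$; the transition functions of $E'$ then take the form $g_{ij}(1+\varepsilon a_{ij})$, and the identity $g'_{ij}g'_{jk}=g'_{ik}$ reduces modulo $\varepsilon^2$ to $\delta a=0$, the usual statement that bundle deformations are governed by $H^1(Y,\text{End}(E))$. Writing $\Phi'|_{U_i}=\Phi+\varepsilon\psi_i$ in these trivializations, I would impose that $\Phi'$ is a genuine global section of $\text{End}(E')\otimes\mathbb{L}_Y$, i.e. that the two local expressions agree after conjugation by $1+\varepsilon a_{ij}$. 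Expanding to first order in $\varepsilon$ yields $\psi_i-\psi_j=\rho(a_{ij})(\Phi)=-e(\Phi)(a_{ij})$, which is precisely the second half of the total cocycle condition. Hence every deformation determines a class in $\mathbb{H}^1(C^\bullet)$.

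Next I would check that this class is well defined and that the assignment is bijective. An isomorphism of deformations inducing the identity on the central fibre is locally $1+\varepsilon h_i$ with $h_i\in\text{End}(E)(U_i)$; it alters $a_{ij}$ by $\delta h$ and, by the same first-order conjugation computation, alters $\psi_i$ by $\rho(h_i)(\Phi)=-e(\Phi)(h_i)$. These two changes assemble into $D(\{h_i\})$, so cohomologous cocycles correspond to isomorphic deformations and the class in $\mathbb{H}^1(C^\bullet)$ depends only on the isomorphism class. For surjectivity I would run the construction in reverse: given a total $1$-cocycle, the component $\{a_{ij}\}$ glues a deformed bundle $E'$ via the transition functions $g_{ij}(1+\varepsilon a_{ij})$, and the relation $\delta\psi=\pm e(\Phi)(a)$ is exactly what makes $\{\Phi+\varepsilon\psi_i\}$ patch to a global Higgs field $\Phi'$ on $E'$; injectivity follows by reversing the coboundary computation.

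The main obstacle I anticipate is essentially bookkeeping: fixing consistent sign conventions so that the first-order conjugation identity matches the total differential $D=\delta\pm e(\Phi)$ with the correct sign in front of $e(\Phi)$, and verifying that the adjoint action $\rho(s)(\Phi)=[s,\Phi]$, which produces $e(\Phi)(s)=-\rho(s)(\Phi)$, is compatible with the way sections of $\text{End}(E)\otimes\mathbb{L}_Y$ transform under the deformed transition functions. The remaining subtlety is justifying the local triviality used at the outset---that over an affine $U_i$ a deformation of $(E|_{U_i},\Phi|_{U_i})$ is trivial---which holds because deformations of a free module over a square-zero thickening are unobstructed and the Higgs field then extends freely. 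Once this is in place, the identification with the \v{C}ech model of $\mathbb{H}^1(C^\bullet)$ is the standard packaging of ``deform $E$ and deform $\Phi$ compatibly'' into a single two-term complex, and the theorem follows.
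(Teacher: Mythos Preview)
Your proposal is correct and follows essentially the same route as the paper. The paper itself only cites Theorem~\ref{205} as background, but its proof of the generalization Theorem~\ref{401} (which, per Remark~\ref{402}, specializes to the smooth case) proceeds exactly as you do: compute $\mathbb{H}^1$ via the \v{C}ech total complex of an affine cover, identify $1$-cocycles $(s_{ij},t_i)$ satisfying $s_{ij}+s_{jk}=s_{ik}$ and $t_i-t_j=e(\Phi)(s_{ij})$ with gluing data $1+s_{ij}$ for $E'$ together with local Higgs fields $\Phi_i+t_i$, and identify coboundaries $(s_i-s_j,e(\Phi)(s_i))$ with trivial deformations. The only cosmetic difference is that the paper phrases the local picture via the pullbacks $E'_i=\pi^*(E|_{U_i})$ and automorphisms $1+s_{ij}$ of $E'_{ij}$, rather than via explicit transition functions $g_{ij}(1+\varepsilon a_{ij})$ on trivializing opens as you do; the content is the same.
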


\section{Deformation of Locally Free Sheaves over Nodal Curve}
In this section, we want to study the (infinitesimal) deformation theory of locally free sheaves over a nodal curve $X$, which will give us a way to calculate the tangent space of $\mathcal{M}(X,n,d)$. We first review the definition of deformation theory from \cite{Hart}.

Let $C'$, $C$ be two local Artin rings over a field $k$ with maximal ideals $\mathfrak{m}_{C'}$, $\mathfrak{m}_{C}$ respectively satisfying the following exact sequence
\begin{equation}\label{exsq3}
0 \longrightarrow J \longrightarrow C' \longrightarrow C \longrightarrow 0,
\end{equation}
where $J$ is an ideal such that $\mathfrak{m}_{C'} J =0$. Thus we can consider $J$ as a $k$-vector space, where $k$ is the residue field of $C$.

Let $X$ be a scheme over $C$ and let $X'$ be an extension of $X$ flat over $C'$. In other words, $X'$ is a flat family over $\Spec C'$ and there is a closed embedding $X \hookrightarrow X'$ such that $X' \times_{\Spec C'} \Spec C=X$. We fix a locally free sheaf $\mathcal{E}$ over $X$. In this section, we will consider the deformation problems over the sequence (\ref{exsq3}). We say that a locally free sheaf $\mathcal{E}'$ over $X'$ is a \emph{deformation} of $\mathcal{E}$, if $\mathcal{E}' \otimes_{\mathcal{O}_{X'}}\mathcal{O}_{X} \cong \mathcal{E}$. If we work on the following exact sequence
\begin{equation}\label{exsq4}
0 \longrightarrow (\varepsilon) \cong k \longrightarrow k[\varepsilon]/(\varepsilon^2) \longrightarrow k \longrightarrow 0,
\end{equation}
where $k$ is a field with characteristic $0$, we say that $\mathcal{E}'$ is an \emph{infinitesimal deformation} of $\mathcal{E}$.

In this paper, we study the deformation theory of locally free sheaves overl the nodal curve $X$. Let $\widetilde{X}$ be the normalization of $X$. Denote by $\pi: \widetilde{X} \rightarrow X$ the natural projection map. We first work on this problem in the affine case. Let $X=\Spec A$ be an affine space over $\Spec C$ and $\widetilde{X}= \Spec \widetilde{A}$ its normalization. We have a short exact sequence
\begin{equation}\label{exsq1}
0 \longrightarrow A \longrightarrow \widetilde{A} \longrightarrow R \longrightarrow 0,
\end{equation}
where $R$ is an $A$-module. Let $E$ be a fixed $A$-module. We have the following exact sequence
\begin{equation}\label{exsq2}
0 \longrightarrow E \longrightarrow \pi_*\widetilde{E} \longrightarrow R_E \longrightarrow 0,
\end{equation}
where $\widetilde{E}= \pi^* E = E \otimes_A \widetilde{A}$ and $R_E=E \otimes_A R$. Note that $\widetilde{E}$ is exactly the bundle corresponding to $E$ in Proposition \ref{203}. The parabolic structure comes from $R_E$. More precisely, we have
\begin{align*}
0 \longrightarrow E \longrightarrow \pi_*\widetilde{E} \longrightarrow \pi_*\left( \sum_{i} \frac{\widetilde{E} \otimes \mathcal{O}_{D_i}}{F_j(E)} \right) \longrightarrow 0,
\end{align*}
where the sum is over all nodes $x_i$ of $X$ and $D_i$ is the preimage of the node $x_i$ in $\widetilde{X}$. Details about this exact sequence can be found in \cite{Bho1,Bho2}. The term $R_E$ in Eq \eqref{exsq2} is exactly $\pi_*\left( \sum_{i} \frac{\widetilde{E} \otimes \mathcal{O}_{D_i}}{F_j(E)} \right)$.

We fix an extension $X'=\Spec A'$ of $X$. Exact sequences \eqref{exsq1} and \eqref{exsq3} then provide the following $3\times3$ commutative diagram,
\begin{center}
\begin{tikzcd}	
& 0 \arrow[d] & 0 \arrow[d] & 0 \arrow[d] &\\
	0 \arrow[r] & J \otimes_C A \arrow[r] \arrow[d] & A' \arrow[r] \arrow[d] & A \arrow[r] \arrow[d] & 0\\

  	0 \arrow[r] & J \otimes_C \widetilde{A} \arrow[r] \arrow[d] & \widetilde{A}' \arrow[r] \arrow[d] & \widetilde{A} \arrow[r] \arrow[d]& 0\\
	
   	0 \arrow[r] & J \otimes_C R \arrow[r] \arrow[d] & R' \arrow[r] \arrow[d] & R \arrow[r] \arrow[d]& 0\\
      & 0 & 0 & 0 &
\end{tikzcd}	
\end{center}
where $\widetilde{A}'=\widetilde{A} \otimes_{A} A'$. Given an $A$-module $E$, let $\widetilde{E}':=E \otimes_A \widetilde{A}'$. We want to classify deformations $E'$ of $E$ over $A'$. In other words, we want to find all $A'$-modules $E'$ such that $\widetilde{E}' \otimes_{\widetilde{A}'} A' =E'$ and satisfying the following $3\times3$ commutative diagram.
\begin{center}
\begin{tikzcd}	
& 0 \arrow[d] & 0 \arrow[d] & 0 \arrow[d] &\\
	0 \arrow[r] & J \otimes_C E \arrow[r] \arrow[d] & E' \arrow[r] \arrow[d] & E \arrow[r] \arrow[d] & 0\\

 0 \arrow[r] & J \otimes_C \pi_* \widetilde{E} \arrow[r] \arrow[d] & \pi_* \widetilde{E}' \arrow[r] \arrow[d] & \pi_* \widetilde{E} \arrow[r] \arrow[d]& 0\\
	
    	0 \arrow[r] & J \otimes_C R_E \arrow[r] \arrow[d] & R'_E \arrow[r] \arrow[d] & R_E \arrow[r] \arrow[d]& 0\\
      & 0 & 0 & 0 &
\end{tikzcd}	
\end{center}

Before we state the result, we want to give the definition about \emph{torsor} and \emph{pseudotorsor} \cite{Hart}. Let $G$ be a group acting on a set $S$. We say that $S$ is a \emph{torsor} under the action of $G$, if it satisfies the following two conditions:
\begin{itemize}
\item[(1)] For every $s \in S$, the induced mapping $g \mapsto g(s)$ is a bijective map from $G$ to $S$,
\item[(2)] the set $S$ is nonempty.
\end{itemize}
We say that $S$ is a \emph{pseudotorsor}, if it satisfies condition $(1)$ above.

\begin{thm}\label{301}
With the same notation as above, let $\mathcal{E}$ be a locally free sheaf over the nodal curve $X$.
\begin{itemize}
\item[(1)] The set of deformations $\mathcal{E}'$ of $\mathcal{E}$ over $X'$ is a pseudotorsor under the action of the additive group $H^0(X, \mathcal{E}^* \otimes J\otimes_C R_\mathcal{E})$.
\item[(2)] If the extension $\mathcal{E}'$ of $\mathcal{E}$ over $X'$ exist locally on $X$, then there is an obstruction $\phi \in H^1(X, \mathcal{E}^* \otimes J\otimes_C R_{\mathcal{E}})$, whose vanishing is necessary and sufficient for the global existence of $E'$. If such a deformation $\mathcal{E}'$ of $\mathcal{E}$ exists, then the set of all such deformations is a torsor under $H^0(X, \mathcal{E}^* \otimes J\otimes_C R_\mathcal{E})$.
\end{itemize}

\end{thm}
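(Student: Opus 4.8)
The plan is to follow the classical Čech-style argument for deformations of locally free sheaves (as in \cite[Chap 6]{Hart}), but carried out compatibly with the three-row diagram relating $\mathcal{E}$, $\pi_*\widetilde{\mathcal{E}}$, and $R_\mathcal{E}$. First I would fix an affine open cover $\{U_j = \Spec A_j\}$ of $X$ that trivializes $\mathcal{E}$, and similarly extensions $U_j' = \Spec A_j'$ covering $X'$; on each $U_j$ the deformation exists trivially, namely $\mathcal{E}|_{U_j} \otimes_{A_j} A_j'$. On overlaps $U_{jk}$ two local deformations differ by an automorphism of $\mathcal{E}'|_{U_{jk}}$ reducing to the identity on $\mathcal{E}|_{U_{jk}}$; using $\mathfrak{m}_{C'}J = 0$ and flatness of $X'$ over $C'$, such an automorphism has the form $1 + \theta_{jk}$ with $\theta_{jk} \in \Gamma(U_{jk}, \mathcal{E}^* \otimes \mathcal{E} \otimes_C J)$. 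The key point specific to the nodal setting is the identification of the relevant coefficient sheaf: I would show that the local-to-global patching data for a \emph{locally free} deformation on $X$ is constrained precisely by the requirement that the induced data on the normalization $\widetilde{X}$ glue to something compatible with the quotient $R_\mathcal{E}$, and that this reduces the structure group of the patching from $\mathcal{E}^* \otimes \mathcal{E} \otimes_C J$ down to $\mathcal{E}^* \otimes J \otimes_C R_\mathcal{E}$. Concretely, via the exact sequence \eqref{exsq2} and the fact that $\widetilde{\mathcal{E}}'$ is pulled back from $\widetilde{X}$ (hence its deformation is rigid once $\widetilde{X}'$ is fixed), the genuine freedom in deforming $\mathcal{E}'$ lives in the $R_\mathcal{E}$-part, twisted by $\mathcal{E}^*$ and $J$.

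Next I would check the cocycle condition: the $\{\theta_{jk}\}$ satisfy $\theta_{jk} + \theta_{kl} = \theta_{jl}$ on triple overlaps (the multiplicative cocycle condition for $1+\theta$ linearizes because $J^2 \subseteq \mathfrak{m}_{C'}J = 0$), so $\{\theta_{jk}\}$ defines a class in $H^1(X, \mathcal{E}^* \otimes J \otimes_C R_\mathcal{E})$; changing the local trivializations changes $\{\theta_{jk}\}$ by a coboundary. This gives a well-defined obstruction class $\phi$, and its vanishing is exactly the statement that the local deformations can be glued to a global $\mathcal{E}'$ — proving the first half of part (2). For part (1) and the second half of part (2): once one global deformation $\mathcal{E}'$ exists, any other differs from it, on each overlap, by a $0$-cochain worth of automorphisms, and globally the set of such modifications that still patch is a torsor under $H^0(X, \mathcal{E}^* \otimes J \otimes_C R_\mathcal{E})$ — one shows the action $\mathcal{E}' \mapsto \mathcal{E}' + \sigma$ (for $\sigma \in H^0$) is free and transitive by the standard argument that the difference of two deformations, read through the diagram, is a global section of the coefficient sheaf. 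When no global deformation is assumed a priori, the same action is still free but possibly on the empty set, giving the pseudotorsor statement of part (1).

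The main obstacle I expect is Step one's bookkeeping: rigorously justifying that the coefficient sheaf is $\mathcal{E}^* \otimes J \otimes_C R_\mathcal{E}$ rather than the naive $\mathcal{E}^* \otimes \mathcal{E} \otimes_C J$. This requires a careful diagram chase in the second $3 \times 3$ square of the excerpt, using that $\pi_*\widetilde{\mathcal{E}}'$ is determined by $\widetilde{A}' = \widetilde{A} \otimes_A A'$ (so contributes no extra moduli beyond that of $\widetilde{X}'$), and that a deformation $\mathcal{E}'$ is the same data as a lift of the surjection $\pi_*\widetilde{\mathcal{E}} \to R_\mathcal{E}$ to $A'$-modules; the obstruction/torsor for such lifts is governed by $\mathrm{Hom}$ out of $\mathcal{E}$ into $J \otimes_C R_\mathcal{E}$, i.e. by $\mathcal{E}^* \otimes J \otimes_C R_\mathcal{E}$. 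Once this local identification is pinned down, the cohomological packaging is routine and follows \cite[Chap 6]{Hart} verbatim. I would also need to remark that $\mathcal{E}^*\otimes J \otimes_C R_\mathcal{E}$ is a coherent sheaf on the $1$-dimensional scheme $X$, so $H^i$ vanishes for $i \geq 2$ and the obstruction theory is complete at the $H^1$ level.
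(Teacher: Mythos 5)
Your proposal is correct and follows essentially the same route as the paper: the crux in both is the diagram chase in the second $3\times3$ square showing that a deformation $\mathcal{E}'$ amounts to a lift of $\pi_*\widetilde{\mathcal{E}} \to R_\mathcal{E}$ inside the canonically determined $\pi_*\widetilde{\mathcal{E}}'$, so that two deformations differ by an element of $\mathrm{Hom}(\mathcal{E}, J\otimes_C R_\mathcal{E}) = H^0(X,\mathcal{E}^*\otimes J\otimes_C R_\mathcal{E})$, followed by the standard \v{C}ech packaging of the pairwise differences of local deformations into a $1$-cocycle whose class is the obstruction. The paper goes directly to this difference-map construction rather than first writing transition automorphisms $1+\theta_{jk}$ valued in $\mathcal{E}^*\otimes\mathcal{E}\otimes_C J$ and then cutting the coefficients down, but the substance is the same.
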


\begin{proof}
We first consider this problem in the affine case and we will use the second $3\times3$ commutative diagram for $E$. Let $E'_1$ and $E'_2$ be two possible choices for $E'$. Let $x_1 \in E'_1$ and $x_2 \in E'_2$ be two elements with the same image $x \in R_E$. Note that the choice of $x_1, x_2$ is not unique but determined only up to some element in $J \otimes_C E$. The element $x_1 - x_2$ is also a well-defined element in $J \otimes_C \widetilde{E}$, which is zero in $J \otimes_C R_E$. Thus $x \in E$ gives us a well-defined element in $J \otimes_C E$. Denote by $\varpi: E \rightarrow J \otimes_C R_E$ the map sending $x$ to the corresponding element in $J \otimes_C R_E$. It is easy to check that this map $\varpi$ is $A$-linear. Therefore we get a map $\varpi \in \text{Hom}_A(E, J\otimes_C R_E)$.

Now given $E'_1$ and a map $\varpi \in \text{Hom}_A(E, J \otimes_C R_E)$, we define another module $E'_2$ fitting into the $3\times3$ diagram. Note that $E'$ and $R'_E$ determine each other uniquely. Therefore it is equivalent to construct $(R'_E)_2$ for $E'_2$. Let $(R'_E)_2$ be the set of $x_2 \in \widetilde{E}'$, whose image $x \in \widetilde{E}$ is in $E$, such that any lifting $x_1$ of $x$ to $E'_1$, the image of $x_2 - x_1 \in J \otimes_C R_E$ is equal to $\varpi(x)$. It is easy to check that $E'_2$ is a well-defined element fitting into the diagram.

Finally, we have to check that this action is a group action. Let $E'_1,E'_2,E'_3$ be three choices of $E'$. The map $\varpi_1$ is defined by $E'_1,E'_2$, $\varpi_2$ is defined by $E'_2,E'_3$ and $\varpi_3$ is defined by $E'_1,E'_3$, then $\varpi_3=\varpi_1+\varpi_2$. Thus the operation $\varpi(E'_1)=E'_2$ is a group action with the additive group $\text{Hom}_A(E, J\otimes_C R_E)$. This additive group $\text{Hom}_A(E, J\otimes_C R_E)$ is exactly $H^0(X, \mathcal{E}^* \otimes J\otimes_C R_\mathcal{E})$. It is easy to check that if the pseudotorsor exists locally in the affine chart, it can be globalized naturally. This finishes the proof of part (1) of the theorem.

To prove (2), we assume that the deformation $\mathcal{E}'$ of $\mathcal{E}$ exists locally. In other words, there exists an open affine covering $\mathcal{X}=(X_i)_{i \in \mathcal{I}}$ of $X$, where $\mathcal{I}$ is the index set, such that on each local chart $X_i$, there exists a deformation $\mathcal{E}'_i$ of $\mathcal{E}_i=\mathcal{E}|_{X_i}$. Let $X'_i:=X_i \times_{\Spec C} \Spec C'$ be the local chart of $X'$. We first focus on the intersection $X'_{ij}=X'_i \cap X'_j$. There are two possible extensions $\mathcal{E}'_i$ and $\mathcal{E}'_j$ of $\mathcal{E}_{ij}$ on the intersection $X'_{ij}=X'_i \cap X'_j$. By part (1), these two extensions define an element $\varpi_{ij} \in H^0(X_{ij}, \mathcal{E}^* \otimes J\otimes_C R_\mathcal{E})$. On the intersection $X'_{ijk}=X'_i \cap X'_j \cap X'_k$ of three affine open sets, there are three deformations $\mathcal{E}'_i$, $\mathcal{E}'_j$ and $\mathcal{E}'_k$. The differences define the elements $\varpi_{ij}$, $\varpi_{ik}$ and $\varpi_{jk}$ in $H^0(X_{ij}, \mathcal{E}^* \otimes J\otimes_C R_\mathcal{E})$ such that $\varpi_{ik}=\varpi_{ij}+\varpi_{jk}$. Clearly, $(\varpi_{ij})$ is a $1$-cocycle for the covering $\mathcal{X}$ and the sheaf $\mathcal{E}^* \otimes J\otimes_C R_E$. If $(\mathcal{E}'^{0}_i)_{i \in \mathcal{I}}$ is another choice of local deformations. Similarly, this choice defines $\varpi^0_{ij} \in H^0(X_{ij}, \mathcal{E}^* \otimes J\otimes_C R_\mathcal{E})$ such that $(\varpi^0_{ij})$ is a $1$-cocycle. Also note that these two deformations $\mathcal{E}'_i$ and $\mathcal{E}'^0_i$ give us a well defined element $\alpha_i \in H^0(X_{i}, \mathcal{E}^* \otimes J\otimes_C R_\mathcal{E})$ such that $\alpha_i-\alpha_j=\varpi_{ij}-\varpi^0_{ij}$. Therefore the cohomology class $\alpha=(\alpha_i)$ is well-defined. This cohomology class $\alpha$ is the obstruction to the existence of a global deformation $\mathcal{E}'$ of $\mathcal{E}$ over $X'$. It is easy to check that a global deformation $\mathcal{E}'$ exists if and only if $\alpha=0$. This finishes the proof of part (2).
\end{proof}

\begin{exmp}\label{302}
In this example, we consider the infinitesimal deformation of a rank $n$, degree $0$ locally free sheaf $E$ on a nodal curve $X$ over $\mathbb{C}$ with a single node. Let $J=(\varepsilon) \cong \mathbb{C}$, $C'=\mathbb{C}[\varepsilon]/(\varepsilon^2)$ and $C=\mathbb{C}$. We use the exact sequence \eqref{exsq4}. In this case, we have
\begin{center}
\begin{tikzcd}	
& 0 \arrow[d] & 0 \arrow[d] & 0 \arrow[d] &\\
	0 \arrow[r] & E \arrow[r] \arrow[d] & E' \arrow[r] \arrow[d] & E \arrow[r] \arrow[d] & 0\\

 0 \arrow[r] & \pi_* \widetilde{E} \arrow[r] \arrow[d] & \pi_* \widetilde{E}' \arrow[r] \arrow[d] & \pi_* \widetilde{E} \arrow[r] \arrow[d]& 0\\
	
    	0 \arrow[r] & R_E \arrow[r] \arrow[d] & R'_E \arrow[r] \arrow[d] & R_E \arrow[r] \arrow[d]& 0\\
      & 0 & 0 & 0 &
\end{tikzcd}	
\end{center}
where $R_E \cong \mathbb{C}$. If $X$ has $s$ nodes, then $R_E \cong \sum\limits_{x \text{ is nodes}}\mathbb{C}_{x}$. Thus, if $H^1(X, \mathcal{E}^* \otimes J\otimes_C R_\mathcal{E})$ vanishes, we have $H^0(X, \mathcal{E}^* \otimes J\otimes_C R_\mathcal{E})=H^0(X, \mathcal{E}^*)$. It is easy to check that $\dim H^0(X, \mathcal{E}^*)=n^2(g_X-1)+1$. This number is the dimension of the tangent space of the moduli space $\mathcal{M}(X,n,0)$ at the smooth point $E$, more precisely, the dimension of $\mathcal{M}(X,n,0)$.

Another interpretation of $\dim H^0(X, \mathcal{E}^*)=n^2(g_X-1)+1$ comes from the moduli space of generalized parabolic bundle $\mathcal{M}_{par}(\widetilde{X},n,0)$, where $\widetilde{X}$ is normalization of $X$. By Theorem 1 in \cite{Bho1}, we know the dimension of $\mathcal{M}_{par}(\widetilde{X},n,0)$ is $n^2(g_{\widetilde{X}}-1)+1+n^2$, where the term $n^2$ is the dimension of the flag variety for the corresponding parabolic structure of $\widetilde{E}$. This flag variety is exactly the Grassmanian $Gr(n,2n)$, i.e. $n$-dimensional subspace of a $2n$-dimensional vector space. Note that $g_{X}=g_{\widetilde{X}}+1$. Thus we have
\begin{align*}
\dim \mathcal{M}_{par}(\widetilde{X},n,0)=n^2(g_{\widetilde{X}}-1)+1+n^2=n^2(g_X-1)+1=\dim \mathcal{M}(X,n,0).
\end{align*}
In fact, the above equality is not a coincidence. Proposition \ref{203} implies an one-to-one correspondence between generalized parabolic bundles and bundles over nodal curve. This correspondence is first discovered by Bhosle in \cite{Bho1}. Thus the dimension of the moduli spaces $\mathcal{M}_{par}(\widetilde{X},n,0)$ and $\mathcal{M}(X,n,0)$ are the same as expected.
\end{exmp}

\section{Deformation of Hitchin Pairs over a Nodal Curve}
In this section, we study the deformation of Hitchin pairs over a nodal curve $X$. We use two approaches to study this problem: one is generalizing Biswas and Ramanan's approach \cite{BisRam} to study the deformation of $L$-twisted Hitchin pairs over nodal curve; the second one is by using the correspondence between Hitchin pairs over nodal curve and generalized parabolic Hitchin pairs over its normalization, which is equivalent to study the deformation of the corresponding GPH over its normalization.

We want to remind the reader that Yokogawa studied the infinitesimal deformation theory for parabolic bundles \cite{Yoko2}. Together with Biswas and Ramanan's work, the deformation of parabolic Higgs bundles is studied in a similar way in \cite{GaGoMu}.  Note that the definition of the parabolic bundle is different from that of the generalized parabolic bundle. The usual parabolic structure depends on a fixed reduced effective divisor $D$ and involves a filtration over each point $x$ in the divisor $D$, while the generalized parabolic structure defines a filtration over each divisor $D_i$, $1 \leq i \leq s$, which can be a single point or the sum of points. In the case of nodal curve $X$, the divisor $D_i$ is the preimage of the node $x_i$ in the nomalization $\widetilde{X}$, which is the sum of two points. Although the definition of parabolic structure is slightly different, the approach to calculate deformations can be applied to the generalized parabolic Hitchin pair.

\subsection{First Approach}
With the same notation as in \S 3.1, let $C'$, $C$ be two local Artin rings satisfying the following exact sequence
\begin{align*}
0 \longrightarrow J \longrightarrow C' \longrightarrow C \longrightarrow 0.
\end{align*}
We can consider $J$ as a $k$-vector space, where $k$ is the residue field of $C$.
Let $X$ be a nodal curve over $C$ and let $X'$ be an extension of $X$ flat over $C'$. Note that $$X' \times_{\Spec C'} \Spec C=X.$$ We fix a line bundle $\mathbb{L}$ over $X$ together with its corresponding line bundle $\mathbb{L}'$ over $X'$. Let $(E,\Phi_{E})$ be a $\mathbb{L}$-twisted Hitchin pair over $X$. A \emph{deformation} $(E',\Phi')$ of $(E,\Phi_{E})$ is a $\mathbb{L}'$-twisted Hitchin pair over $X'$ such that its restriction to $X$ is $(E,\Phi_{E})$. Note that $\Phi_{E}$ can be considered as a section of $\text{End}(E) \otimes \mathbb{L}$.

Let us consider a special case. Let $C'=C[J]:=C \oplus J$. The algebra structure of $C'$ is given as follows: $$(m,n)(p,q)=(mn,mq+np).$$ Clearly, $J$ is a nilpotent ideal in $C'$. With the same notation as above, let $E'=E \times \Spec k[J]$. For a section $s$ of $\text{End}(E) \otimes J$, the corresponding automorphism of $E'$ is denoted by $1+s$. Moreover, if $v+w$ is a section of $\text{End}(E') \otimes \mathbb{L}'$, we have
\begin{align*}
\rho(1+s)(v+w)=v+w+\rho(s)(v),
\end{align*}
where $\rho$ is natural action of $\text{End}(E)$ on itself. The deformation complex $C_{J}^{\bullet}$ is defined as follows
\begin{align*}
C_{J}^{\bullet}: C_{J}^0 = \text{End}(E) \otimes J \xrightarrow{e(\Phi)} C_J^1 = \text{End}(E) \otimes \mathbb{L} \otimes J,
\end{align*}
where the map $e(\Phi)$ is given by
\begin{align*}
e(\Phi)(s)=-\rho(s)(\Phi).
\end{align*}

\begin{thm}\label{401}
Let $(E,\Phi)$ be a $\mathbb{L}$-twisted Hitchin pair over the nodal curve $X$. The set of deformations of $(E,\Phi)$ is isomorphic to $\mathbb{H}^1(C_J^{\bullet})$, where $C_J^{\bullet}$ is the complex $$C_J^{\bullet}:C_J^0 =End(E) \otimes J \xrightarrow{e(\Phi)} C_J^1 = \text{End}(E) \otimes \mathbb{L} \otimes J,$$
where $e(\Phi)$ is defined locally as above.
\end{thm}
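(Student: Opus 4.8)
The plan is to mimic the argument of \cite[Theorem 2.3]{BisRam} but to carry it out in the local-to-global (Čech) framework used in the proof of Theorem~\ref{301}, since over a nodal curve we cannot simply invoke smoothness. First I would fix an affine open cover $\mathcal{X}=(X_i)_{i\in\mathcal{I}}$ of $X$ over which $E$ is trivial and over which the local deformations are controlled. On each $X_i$, a deformation of the locally free sheaf $E|_{X_i}$ over $X'_i$ exists (indeed, on an affine it is essentially unique up to automorphism, since there is no local obstruction), so the content is really in the gluing data together with the Higgs field. Concretely, choosing local isomorphisms of any two deformations on overlaps, the gluing of the sheaf part is governed by a $1$-cochain $(g_{ij})$ with $g_{ij}\in\mathrm{End}(E)\otimes J$ over $X_{ij}$, satisfying the cocycle condition $g_{ik}=g_{ij}+g_{jk}$ (this is the additive incarnation of $1+g_{ij}$ as in the smooth case), while the condition that $\Phi$ extend compatibly forces, on each $X_i$, a local lift $\Phi'_i=\Phi+\psi_i$ with $\psi_i\in\mathrm{End}(E)\otimes\mathbb{L}\otimes J$, and the overlap compatibility becomes $\psi_i-\psi_j=e(\Phi)(g_{ij})=-\rho(g_{ij})(\Phi)$ over $X_{ij}$.

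Second, I would interpret this data as a $1$-cocycle for the complex $C_J^\bullet$ in the hypercohomology Čech bicomplex: the pair $\bigl((g_{ij}),(\psi_i)\bigr)$ is precisely a Čech $1$-hypercocycle of $C_J^\bullet$ with respect to $\mathcal{X}$, because the two conditions $g_{ik}-g_{ij}+g_{jk}=0$ and $\psi_i-\psi_j-e(\Phi)(g_{ij})=0$ are exactly the closedness conditions in total degree $1$. Changing the local trivializations of the deformations $E'_i$ by automorphisms $1+h_i$ with $h_i\in\mathrm{End}(E)\otimes J$, and correspondingly changing the local lifts $\Phi'_i$, alters $(g_{ij})$ by $h_i-h_j$ and $(\psi_i)$ by $-e(\Phi)(h_i)$, i.e.\ by a hypercoboundary. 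Hence the assignment sending a deformation $(E',\Phi')$ to the class of $\bigl((g_{ij}),(\psi_i)\bigr)$ is a well-defined map from the set of deformations of $(E,\Phi)$ to $\check{\mathbb{H}}^1(\mathcal{X},C_J^\bullet)=\mathbb{H}^1(C_J^\bullet)$, once one checks (by refining covers, exactly as in Theorem~\ref{301}) that the class is independent of all choices.

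Third, I would show this map is a bijection. Surjectivity: given a hypercocycle $\bigl((g_{ij}),(\psi_i)\bigr)$, reverse the construction — glue the trivial local deformations $E|_{X_i}$ over $X'_i$ using the transition data determined by $g_{ij}$ (the cocycle condition guarantees this glues to a genuine locally free sheaf $E'$ on $X'$, flat over $C'$ and restricting to $E$), and glue the local Higgs fields $\Phi+\psi_i$, which patch by the second cocycle relation into a global $\Phi'\colon E'\to E'\otimes\mathbb{L}'$ restricting to $\Phi$. Injectivity/triviality of the kernel: if the class vanishes then after refining the cover one can trivialize the gluing data by automorphisms $1+h_i$, producing an isomorphism $(E',\Phi')\cong(E\times\Spec C', \Phi)$ over $X'$ compatible with the restriction to $X$; conversely the trivial deformation visibly goes to $0$. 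Since both the forward and backward constructions are inverse to one another on the nose, this gives the claimed isomorphism of sets.

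The main obstacle I anticipate is bookkeeping the nodal structure correctly: over a node the stalk of $\mathrm{End}(E)$ and the flatness of $E'$ over $C'$ must be handled with care, because $X$ (and $X'$) is not smooth, so "a deformation exists locally" is not automatic in the naive sense and one should point to the affine analysis of \S 3 (the $3\times3$ diagram relating $E'$, $\pi_*\widetilde{E}'$, $R'_E$) to justify that the local model is the trivial one and that gluing stays within locally free sheaves. The other delicate point — purely formal but easy to get wrong — is the sign and $\rho$-equivariance in the identity $\psi_i-\psi_j=-\rho(g_{ij})(\Phi)$, which must match the definition $e(\Phi)(s)=-\rho(s)(\Phi)$ exactly so that the data is a genuine $C_J^\bullet$-hypercocycle; this is the step where the formula for $\rho(1+s)(v+w)$ stated just before the theorem gets used.
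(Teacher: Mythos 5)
Your proposal is correct and follows essentially the same route as the paper: an affine (Čech) cover, gluing data $(s_{ij})$ for the sheaf and local lifts $\Phi+t_i$ of the Higgs field satisfying $t_i-t_j=e(\Phi)(s_{ij})$, identified with a degree-one hypercocycle of $C_J^{\bullet}$, with changes of local trivialization accounting for hypercoboundaries. The paper presents the direction from cocycles to deformations in more detail and sketches the inverse, but the two arguments are the same in substance.
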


\begin{proof}
The proof of this theorem is similar to that of Theorem 2.3 in \cite{BisRam}. We only give the construction of the deformation of $(E,\Phi)$ from an element in $\mathbb{H}^1(C_J^{\bullet})$.

Let $\mathcal{U}=\{U_i=\text{Spec}(A_i)\}$ be an open covering of $X$ by affine schemes. Set $$ \text{End}(E) \otimes J|_{U_i}=C^0_i, \quad \text{End}(E) \otimes \mathbb{L} \otimes J|_{U_i}=C^1_{i},$$
where $C^0_i$ and $C^1_i$ are $A_i$-modules. Similarly, modules $C^0_{ij}$ (resp. $C^1_{ij}$) are resctrictions of $C_J^0$ (resp. $C_J^1$) to $U_{ij}=U_i \bigcap U_j$. We consider the following \^Cech resolution of $C^{\bullet}_{J}$:
\begin{center}
\begin{tikzcd}
& 0 \arrow[d] & 0 \arrow[d] & \\
0 \arrow[r] & C_J^0 \arrow[r,"e(\Phi)"] \arrow[d,"d^0_0"] & C_J^1 \arrow[r] \arrow[d,"d^1_0"] & 0 \\
0 \arrow[r] & \sum C_i^0 \arrow[r,"e(\Phi)"] \arrow[d,"d^0_1"] & \sum C_i^1 \arrow[r] \arrow[d,"d^1_1"] & 0 \\
0 \arrow[r] & \sum C_{ij}^0 \arrow[r,"e(\Phi)"] \arrow[d,"d^0_2"] & C_{ij}^1 \arrow[r] \arrow[d,"d^1_2"] & 0 \\
& \vdots  & \vdots  &
\end{tikzcd}
\end{center}
The first hypercohomology group $\mathbb{H}^1(C^{\bullet}_J)$ can be calculated from the above diagram. Let $Z$ be the set of pairs $(s_{ij}, t_i)$, where $s_{ij} \in C^0_{ij}$ and $t_i \in C^1_i$ satisfying the following conditions:
\begin{enumerate}
\item $s_{ij}+s_{jk}=s_{ik}$ as elements of $C^0_{ijk}$.
\item $t_i-t_j=e(\Phi)(s_{ij})$ as elements of $C^1_{ij}$.
\end{enumerate}
Let $B$ be the subset of $Z$ consisting of elements $(s_i-s_j,e(\Phi)(s_i))$, where $s_i \in C^0_i$. The hypercohomology group $\mathbb{H}^1(C^{\bullet}_J)$ is $Z/B$.

Given an element $(s_{ij},t_i) \in Z$, we shall construct a $\mathbb{L}$-twisted Higgs bundle $(E',\Phi')$ on $X'$ such that $E'|_{X} \cong E$ and $\Phi' |_{X} \cong \Phi$.

For each $U_i[J]$, there is a natural projection $\pi: U_i[J] \rightarrow U_i$. Take the sheaf $E'_i=\pi^*(E|_{U_i})$. By the first condition of $Z$, we can identify the restrictions of $E'_i$ and $E'_j$ to $U_{ij}[J]$ by the isomorphism $1+s_{ij}$ of $E'_{ij}$. Therefore we get a well-defined quasi-coherent sheaf $E'$ on $X'$.

On each affine set $U_i[J]$, we have $\Phi_{i}+t_i : End(E'_i) \otimes \mathbb{L}$. It is easy to check $$e(\Phi_{i} + t_i)(1+s_{ij})=\Phi_{j}+t_j$$
by the second condition of $Z$. Therefore $\{\Phi_{i}+t_i\}$ can be glued together to give a global homomorphism $\Phi':E' \rightarrow  E' \otimes \mathbb{L}'$. Therefore, for each element in $Z$, we construct a deformation of $(E,\Phi)$.

Let $(s_{ij},t_i)$ be an element in $B$. In other words, $s_{ij}=s_i-s_j$ and $t_i=e(\Phi)(s_i)$. The identification of $E'_i \cong E'_j$ on $U_{ij}[J]$ is given by the isomorphism $$1+s_{ij}=1+(s_i-s_j).$$ Consider the following diagram
\begin{center}
\begin{tikzcd}
E'_{ij} \arrow[d,"1+s_{ij}"] \arrow[r,"1+s_{i}"] & E'_{ij} \arrow[d, "\text{Id}"]  \\
E'_{ij} \arrow[r,"1+s_{j}"] & E'_{ij}
\end{tikzcd}
\end{center}
The commutativity of the above diagram implies that $E'$ is trivial. Similarly, we have $$e(\Phi_{i} +t_i)(1+s_i)=\Phi_{i}.$$
Therefore the associated Hitchin pair $(E',\Phi')$ is isomorphic to $(\pi^*E,\pi^* \Phi)$. The above construction gives us a well-defined map from $\mathbb{H}^1(C_J^{\bullet})$ to the set of deformations of $(E,\Phi)$.

Note that given a deformation $(E',\Phi')$ of $(E,\Phi)$, we can define an element $(s_{ij},t_i)$ by restricting to the open set $U_i[J]$. It is easy to check that the element $(s_{ij},t_i)$ is a well-defined element in $\mathbb{H}^1(C_J^{\bullet})$. Thus we construct a map from the set of deformations of $(E,\Phi)$ to $\mathbb{H}^1(C_J^{\bullet})$.

It is easy to check that the above two maps are inverse to each other. Thus the set of deformations of $(E,\Phi)$ is isomorphic to $\mathbb{H}^1(C_J^{\bullet})$.

\end{proof}

\begin{rem}\label{402}
The above proof works for both a singular (nodal) curve and a smooth curve. It can be also applied to a general scheme $X$. More generally, the above proof can be generalized for an algebraic space or algebraic stack. Note that if we working on an algebraic space, the covering $\mathcal{U}=\{U_i=\text{Spec}(A_i)\}$ that we took in the proof should be an \'etale covering. Thus in the case of algebraic space or stack, the hypercohomology group we calculate is in fact the \'etale cohomology .
\end{rem}

\subsection{Second Approach}
By Theorem \ref{204}, we have a birational morphism between the moduli space $\mathcal{M}(X,n,d,\mathbb{L})$ and the moduli space $\mathcal{M}_{par}^{good}(\widetilde{X},n,d,\widetilde{\mathbb{L}})$ of good GPH, which is induced by the correspondence in Proposition \ref{203}. Thus studying the deformation theory of $\mathbb{L}$-twisted Hitchin pairs $(E,\Phi)$ over a nodal curve $X$ is equivalent to study the deformation theory of the corresponding $\widetilde{\mathbb{L}}$-twisted good GPH $(\widetilde{E},F(\widetilde{E}),\Phi_{\widetilde{E}})$ over $\widetilde{X}$.

Let $\text{ParEnd}(\widetilde{E})$ be the set of parabolic homomorphisms of the generalized parabolic bundle $\widetilde{E}$. As we discussed in \S2.2, we have the following exact sequence
\begin{align*}
0 \rightarrow \text{ParEnd}(\widetilde{E}) \rightarrow \text{End}(\widetilde{E}) \rightarrow \text{End}(E_D,E_D)/ P_D(E,E) \rightarrow 0.
\end{align*}
With respect to the notation in \S 4.1, the deformation complex $C_{par,J}^{\bullet}$ in the parabolic case is defined as follows
\begin{align*}
C_{par,J}^{\bullet}: C_{par,J}^0 = \text{ParEnd}(\widetilde{E}) \otimes J \xrightarrow{e(\Phi_{\widetilde{E}})} C_{par,J}^1 = \text{ParEnd}(\widetilde{E}) \otimes \widetilde{L} \otimes J.
\end{align*}

With the same proof as in Theorem \ref{401}, we have the following corollary.
\begin{cor}\label{403}
Let $(\widetilde{E},F(\widetilde{E}),\Phi_{\widetilde{E}})$ be a good generalized parabolic Higgs bundle over $\widetilde{X}$. The set of deformations of $(\widetilde{E},F(\widetilde{E}),\Phi_{\widetilde{E}})$ is isomorphic to $\mathbb{H}^1(C_{par,J}^{\bullet})$, where $C_{par,J}^{\bullet}$ is the complex $$C_{par,J}^{\bullet}: C_{par,J}^0 = \text{ParEnd}(\widetilde{E}) \otimes J \xrightarrow{e(\Phi_{\widetilde{E}})} C_{par,J}^1 = \text{ParEnd}(\widetilde{E}) \otimes \widetilde{\mathbb{L}} \otimes J.$$
\end{cor}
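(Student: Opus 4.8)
The plan is to mimic the proof of Theorem \ref{401} almost verbatim, replacing the sheaf $\text{End}(E)$ by $\text{ParEnd}(\widetilde{E})$ throughout, and checking that each step of that argument survives the passage to the parabolic endomorphism subsheaf. Concretely, I would fix an affine open cover $\mathcal{U}=\{U_i=\Spec(A_i)\}$ of $\widetilde{X}$ adapted to the divisors $D_j$ (so that each $D_j\cap U_i$ is cut out by a single section), take the \v{C}ech resolution of the two-term complex $C_{par,J}^{\bullet}$ exactly as in the proof of Theorem \ref{401}, and describe $\mathbb{H}^1(C_{par,J}^{\bullet})$ as $Z/B$ with the same cocycle conditions $(1)$ and $(2)$, now with $s_{ij}\in \text{ParEnd}(\widetilde{E})\otimes J|_{U_{ij}}$ and $t_i\in \text{ParEnd}(\widetilde{E})\otimes\widetilde{\mathbb{L}}\otimes J|_{U_i}$.

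The construction going from a cocycle $(s_{ij},t_i)\in Z$ to a deformation proceeds identically: on $U_i[J]$ set $E_i'=\pi^*(\widetilde{E}|_{U_i})$, glue via the automorphism $1+s_{ij}$, and glue the local Higgs fields $\Phi_i+t_i$ using condition $(2)$ and the identity $e(\Phi_i+t_i)(1+s_{ij})=\Phi_j+t_j$. The one extra thing to verify — and this is the only genuinely new point — is that the resulting $(E',F(E'),\Phi')$ is again a \emph{generalized parabolic} Hitchin pair, i.e. that $1+s_{ij}$ with $s_{ij}$ parabolic preserves the filtrations $F_j(\widetilde{E})$ so that they glue to a filtration $F_j(E')$ on $E'$, and that $\Phi'$ preserves it. This is immediate from the definition of $\text{ParEnd}$: a parabolic endomorphism by construction sends $F_j$ into $F_j$, hence $1+s_{ij}$ restricts to an automorphism of $E'\otimes\mathcal{O}_{D_j}$ carrying $F_j(E_i')$ to $F_j(E_j')$; similarly $t_i\in\text{ParEnd}(\widetilde{E})\otimes\widetilde{\mathbb{L}}\otimes J$ preserves the filtration, so $\Phi_i+t_i$ does, and the glued $\Phi'$ satisfies $\Phi'(F_j(E'))\subseteq F_j(E')\otimes\widetilde{\mathbb{L}}$. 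Conversely, given a deformation $(\widetilde{E}',F(\widetilde{E}'),\Phi_{\widetilde{E}'})$, restricting to the $U_i[J]$ and comparing with $\pi^*$ of the reduction produces local data $(s_{ij},t_i)$; because the deformation respects the filtrations, the differences $s_{ij}$ and the $t_i$ automatically land in the parabolic subsheaf, so one gets a well-defined class in $\mathbb{H}^1(C_{par,J}^{\bullet})$. Then one checks, exactly as in Theorem \ref{401}, that the class is independent of choices (the trivial deformation corresponds precisely to $B$, via the commuting square with $1+s_i$) and that the two maps are mutually inverse.

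The main — indeed essentially the only — obstacle is bookkeeping rather than conceptual: one must be careful that all the \v{C}ech gluing takes place within the \emph{subsheaf} $\text{ParEnd}(\widetilde{E})\subseteq\text{End}(\widetilde{E})$ and that the parabolic condition is preserved under the operations $1+s_{ij}$ and $\Phi_i\mapsto\Phi_i+t_i$; this is where the closure of $\text{ParEnd}$ under composition and the fact that it is an $\mathcal{O}_{\widetilde{X}}$-module enter. Since the filtrations live on the finite subschemes $D_j$ and the statement of the corollary forgets the weight $\alpha$ (so there is no stability to track), no positivity or genericity hypotheses are needed, and the argument of Theorem \ref{401} applies mutatis mutandis. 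I would therefore simply indicate these checks and refer to the proof of Theorem \ref{401} for the remaining, identical, details.
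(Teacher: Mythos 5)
Your proposal is correct and follows exactly the route the paper intends: the paper gives no separate argument for Corollary \ref{403} beyond the remark ``with the same proof as in Theorem \ref{401},'' i.e.\ the verbatim substitution of $\text{ParEnd}(\widetilde{E})$ for $\text{End}(E)$. Your additional check that $1+s_{ij}$ and $\Phi_i+t_i$ preserve the filtrations $F_j$ (so the glued object is again a generalized parabolic Hitchin pair) is precisely the one point the paper leaves implicit, and you handle it correctly.
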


\begin{rem}\label{404}
Theorem \ref{401} and Corollary \ref{403} imply that $\mathbb{H}^1(C_{par,J}^{\bullet}(E)) \cong \mathbb{H}^1(C_{J}^{\bullet}(\widetilde{E}))$. This isomorphism can be understood from the exact sequence
\begin{align*}
0 \rightarrow \text{ParEnd}(\widetilde{E}) \rightarrow \text{End}(\widetilde{E}) \rightarrow \text{End}(E_D,E_D)/ P_D(E,E) \rightarrow 0.
\end{align*}
The space of parabolic homomorphisms $\text{ParEnd}(\widetilde{E})$ is exactly the homomorphisms $\text{End}(E)$ over the nodal curve. This property is implied in \cite[Section 1, 4]{Bho1}. Thus the complexes
\begin{align*}
& C_{par,J}^{\bullet}: C_{par,J}^0 = \text{ParEnd}(\widetilde{E}) \otimes J \xrightarrow{e(\Phi')} C_{par,J}^1 = \text{ParEnd}(\widetilde{E}) \otimes \widetilde{L} \otimes J \\
& C_{J}^{\bullet}: C_{J}^0 = \text{End}(E) \otimes J \xrightarrow{e(\Phi)} C_{J}^1 = \text{End}(E) \otimes L \otimes J,
\end{align*}
are isomorphic. Thus we have the isomorphism of the hypercohomology groups $\mathbb{H}^1(C_{par,J}^{\bullet}(E)) \cong \mathbb{H}^1(C_{J}^{\bullet}(\widetilde{E}))$.
\end{rem}

\bigskip

\noindent\small{\textsc{Department of Mathematics, Sun Yat-Sen University, China}}\\
\emph{E-mail address}:  \texttt{sunh66@mail.sysu.edu.cn}
\end{document}